\newtheorem{theorem}{Theorem}[section]
\newtheorem{corollary}[theorem]{Corollary}
\newtheorem{lemma}[theorem]{Lemma}
\newtheorem{proposition}[theorem]{Proposition}
\theoremstyle{definition}
\newtheorem{definition}[theorem]{Definition}
\newtheorem{example}[theorem]{Example}
\newtheorem{remark}[theorem]{Remark}
\newcommand{\R}{\mathbb R}
\newcommand{\dx}{\lambda}
\newcommand{\Prob}{\operatorname{Prob}(M)}
\newcommand{\Dens}{\operatorname{Dens}_+(M)}
\newcommand{\Diff}{\operatorname{Diff}(M)}
\newcommand{\DiffRshift}{\widetilde{\operatorname{Diff}}(\mathbb R)}
\newcommand{\DiffRLie}{\mathfrak g}
\newcommand{\DiffR}{\operatorname{Diff}(\mathbb R)}
\newcommand{\DensR}{\widetilde{\operatorname{Dens}}_{+}(\R)}
\newcommand{\SDiff}{\operatorname{Diff}_{\dx}(M)}
\newcommand{\GFR}{G}
\newcommand{\nablabar}{\overline{\nabla}}
\newcommand{\na}{\nabla^{(\alpha)}}
\newcommand{\nabar}{\overline{\nabla}^{(\alpha)}}
\title[Amari--\v{C}encov $\alpha$-connections and Proudman--Johnson equations]{A Riemannian viewpoint on the Amari--\v{C}encov $\alpha$-connections and Proudman--Johnson equations}
\author[]
{Martin Bauer, Alice Le Brigant, Cy Maor}
\address{M. Bauer: Florida State University; A. Le Brigant: SAMM, Universit\'e Paris 1; C. Maor: Einstein Institute of Mathematics, The Hebrew University of
Jerusalem}
\email{bauer@math.fsu.edu, alice.le-brigant@univ-paris1.fr.com,
cy.maor@mail.huji.ac.il}
\date{\today}
\keywords{}
\subjclass[2010]{%
}
\begin{document}

\begin{abstract}
We give a new geometric interpretation of the Amari--\v{C}encov $\alpha$-connections $\nabla^{(\alpha)}$ from information geometry:
On the space of densities $\Dens$, we show that there exist Riemannian metrics $G^\alpha$, which we call $\alpha$-Fisher--Rao metrics, whose Levi-Civita connections are $\nabla^{(\alpha)}$.
With the exception of $\alpha=0$ (the Fisher--Rao metric), these metrics are non-invariant to the action of the diffeomorphism group $\Diff$, even though the connections are invariant. 
This gives a new way of interpreting the geodesics of the $\nabla^{(\alpha)}$ as energy-minimizing curves.
On the space of probability densities $\Prob$, we show that the same phenomenon holds for $\alpha\in \{-1,0,1\}$ and that the $\alpha$-connections are not metric otherwise. We show that $\nabla^{(\alpha)}$-geodesics on this space 
can be interpreted as radial projections of straight lines on appropriate hyper-surfaces, and use this geometric picture to obtain geodesic convexity for any $\alpha\in \R$.
In addition, we prove analogous results for appropriate metrics and connections on $\Diff$, which, for the case $M=\R$, imply that the generalized Proudman--Johnson equations on the real line are the Euler--Arnold equations of non-right invariant metrics. Finally, in the finite-dimensional case, we show that $\nabla^{(\alpha)}$ can be metric or non-metric depending on the considered statistical model. 
\end{abstract}

\maketitle
\setcounter{tocdepth}{1}
\tableofcontents

\section{Introduction and main results}

Information geometry is concerned with the study of spaces of probability densities as differentiable manifolds. 
Two of the most important structures in information geometry are the Fisher--Rao metric \cite{rao1945,friedrich1991} and the Amari--\v{C}encov $\alpha$-connections \cite{cencov1982,amari2000methods,gibilisco1998connections,newton2012infinite}, for $\alpha\in \mathbb{R}$ (the case $\alpha=0$ corresponding to the Levi-Civita connection of the Fisher--Rao metric).
The first is the unique invariant metric \cite{cencov1982,bauer2016uniqueness} while the second is the family of all invariant torsionless affine connections \cite{ay2015information}.
Here, invariance 
has one of two meanings: in the finite-dimensional setting of parametric statistical models, it refers to invariance with respect to sufficient statistics, while in 
the non-parametric, infinite-dimensional setting of all densities, the meaning can be alternatively understood as invariance under diffeomorphic change of the support \cite{ay2015information, bauer2016uniqueness}.

These 
invariant structures have numerous applications in various fields, ranging from statistical inference and learning \cite{amari2016information} to, more recently, optimal transport \cite{ay2024information}, quantum field theory \cite{naudts2024duality}, Markov chains \cite{wolfer2023information}, image analysis~\cite{bauer2015diffeomorphic} and machine learning: generative neural networks \cite{cheng2025alpha}, natural language processing \cite{volpi2021natural} and reinforcement learning \cite{nedergaard2025information}.

Another motivation to study infinite-dimensional information geometry stems from its link to geometric hydrodynamics~\cite{khesin2024information,khesin2013geometry,lenells2014amari}, where fluid motion is described by geodesics on the space of (volume preserving) diffeomorphisms of the fluid domain. Indeed,  
given a compact manifold $M$ with a volume form $\lambda$, the map $\varphi\mapsto \varphi_*\lambda$ maps the diffeomorphism group $\Diff$ to the space $\Prob$ of probability densities, thus indentifying $\Prob$ with the quotient space $\Diff/\SDiff$ of diffeomorphisms modulo diffeomorphisms that preserves a reference volume form $\lambda$. 
(In non-compact cases like $M=\R$, this yields the space $\Dens$ of all positive densities with appropriate decay conditions.) 
Thus, one can pullback the 
 Fisher--Rao metric and the $\alpha$-connections to obtain right-invariant metrics and connections on $\Diff$. 
 This was studied in \cite{khesin2013geometry,modin2015generalized} for the Fisher--Rao metric, and in \cite{lenells2014amari,bauer2024p} for the $\alpha$-connections.
 See \cite{khesin2024information} for a recent overview.
 Interestingly, for $M=S^1$ and $M=\R$, the geodesic equations that arise on $\Diff$ are well-known equations from hydrodynamics: the Hunter--Saxton equation is the geodesic equation of the Fisher--Rao metric \cite{khesin2013geometry}, and the generalized Proudman--Johnson equations are the geodesic equations of the $\alpha$-connections \cite{lenells2014amari}. 
The geodesic equations for a general manifold $M$ are thus the higher-dimensional version of these equations, and the study of the geometry of the $\alpha$-connections is a way to gain insight on their solutions.

Recently, the authors of the present paper took a step in that direction by studying the geometry of the $\alpha$-connections on two different spaces: the space of positive densities $\Dens$ and the space of probability densities $\Prob$. They showed in~\cite{bauer2024p} that on $\Dens$, the $\alpha$-connections for $\alpha\in(-1,1)$ define the same geodesics as a family of invariant Finsler metrics, the $L^p$-Fisher--Rao metrics, where $p=2/(1-\alpha)$. On $\Prob$, it was shown that the $\alpha$-connections are the pullback by the \emph{$p$-root map }$\mu\mapsto (\mu/\lambda)^{1/p}$ of the connection on the $L^p$-sphere in the space of functions, obtained by a radial projection of the trivial connection.
 In both cases, this geometric perspective led to explicit solutions of the geodesic equations: On $\Dens$, these are pullbacks by the $p$-root map 
 of straight lines, and on $\Prob$, they are pullbacks of projections of straight lines on the $L^p$-sphere.
 This is analogous to the analysis of the non-periodic \cite{bauer2014homogeneous} and periodic \cite{lenells2007hunter} Hunter--Saxton equation, respectively.
On $\DiffR$ this analysis led to solutions of the non-periodic generalized Proudman--Johnson equation (or equivalently, the $r$-Hunter--Saxton equation) \cite{cotter2020r, bauer2022geometric}, and for a general closed manifold $M$, similar analysis was used to estimate the diameter of $\Diff$ with respect to right-invariant Sobolev metrics \cite{bauer2021can}. 

In this paper, we further explore the geometry associated to the $\alpha$-connections on spaces of densities and probability densities, this time from a Riemannian perspective, and its implications for some well-known equations in hydrodynamics.

\subsection{Main results and future directions}
We now describe the main contributions of this article, as well as open questions that arise from our results:
\begin{enumerate}
\item 
    On the space of positive densities $\Dens$, we present a family of Riemannian metrics $G^\alpha$, which we call the \emph{$\alpha$-Fisher--Rao metrics}, and show that for every $\alpha\in \R$, the $\alpha$-connection $\nabla^{(\alpha)}$ is the Levi-Civita connection of $G^\alpha$ (Theorem~\ref{thm:alphaFR_connection_dens}). 
    With the exception of $\alpha=0$, corresponding to the Fisher--Rao metric, all these metrics are non-invariant to the action of $\Diff$, but their connections are. 
    Note that this characterization of the $\alpha$-connections as a Levi-Civita connection holds for any $\alpha\in \R$, unlike 
  the one of being associated with invariant Finsler metrics~\cite{bauer2024p}, which only holds for $\alpha\in (-1,1)$.
\item 
    Applying this characterization to the case $M=\R$, and pulling it back to $\DiffR$ (with appropriate decay conditions at infinity),
    we obtain that the non-periodic, generalized Proudman--Johnson equations are the Euler--Arnold equations of non-invariant metrics.
    To the best of our knowledge, the only other case of an invariant equation that arises as geodesic equation of a non-invariant metric on a group of diffeomorphisms is Burgers' equation, which is the geodesic equation of the non-invariant $L^2$-metric related to optimal mass transport~\cite{otto2001geometry}; in future work it would be of interest to obtain a complete characterization of all instances for this phenomenon, both on the groups of diffeomorphisms and on general finite and infinite dimensional Lie groups. 
\item 
    On the space of probability densities $\Prob$, we show that the Levi-Civita connection of the restriction of  $G^\alpha$ to $\Prob$ coincides with the $\alpha$-connection  on $\Prob$ only for $\alpha\in \{0,-1\}$.
    For $\alpha=1$, the $\alpha$-connection is the Levi-Civita connection of a variant of the $1$-Fisher--Rao metric, and for $\alpha\notin \{-1,0,1\}$, it is not a metric connection with respect to any Riemannian metric (Theorem~\ref{thm:alpha_nonmetric}).
    For finite-dimensional parametric statistical models, we show in Proposition~\ref{prop:exponential} that a similar phenomenon holds for two-dimensional exponential families; it would be interesting to know whether this result holds in any dimension. We also show that other statistical models can present a different behavior, namely we 
    give an  example of a statistical model such that all $\alpha$-connections are metric (Example~\ref{ex:finite-dim}).
\item 
    Finally, we show that the geometric picture presented in \cite{bauer2024p} of the geodesics of the $\alpha$-connections  on $\Prob$, as pullbacks of projections of straight lines on an appropriate surface, holds for any $\alpha\in \R$ (Theorem~\ref{thm:sulotion_prob}).
    The difference from the case $\alpha\in (-1,1)$ presented in \cite{bauer2024p} is that the surfaces projected upon are not parts of spheres but can be non-convex and unbounded. 
    We use this geometric picture to show that, for all $\alpha$, $\Prob$ is uniquely geodesically convex with respect to $\nabla^{(\alpha)}$, i.e., that for any two elements in the space there exists a unique $\alpha$-geodesic connecting them.
    In the case of $M=S^1$, which corresponds to the periodic generalized Proudman--Johnson equations, this geometric picture might lead to a geometric explanation of the different blowup behavior of the equations for various values of $\alpha$ (see \cite{sarria2013blow,kogelbauer2020global}), as well as to establish similar results for the higher-dimensional case; as this paper focuses on the geometric picture, we intend to investigate this direction in a separate future work.
\end{enumerate}
A summary of the   metric properties of the $\alpha$-connection can be found in Figure~\ref{fig:summary}.
\begin{figure}\label{fig:summary}
\bgroup
\def\arraystretch{1.25}
\begin{tabular}{|c|c|c|c|c|c|c|c|}
  \hline
  $\alpha$                  & $(-\infty,-1)$ & $-1$ & $(-1,0)$ & $0$ & $(0,1)$ & $1$           & $(1,+\infty)$\\
  \hline
  \multirow{2}{*}{$\Dens$} & \cellcolor{gray!50} & \multicolumn{5}{c|}{$L^p$-FR (Finsler, invariant)} & \cellcolor{gray!50}\\
  \cline{2-8}
                           & \multicolumn{7}{c|}{$\alpha$-FR (Riemannian, non invariant)} \\
  \hline
  $\Prob$ & \,\,non metric\,\, & $\alpha$-FR & \,\,non metric\,\, & FR & \,\,non metric\,\, & \,\,\,$\approx \alpha$-FR\,\,\, & \,\,non metric\,\,\\
  \hline
\end{tabular}
\egroup
\caption{Summary of the metric properties of the $\alpha$-connection. Depending on $\alpha$ and whether it is defined on $\Prob$ or $\Dens$, the $\alpha$-connection can either be the Levi-Civita connection of the non-invariant, Riemannian, $\alpha$-Fisher--Rao ($\alpha$-FR) metric, and/or define the same geodesics as the invariant, Finsler, $L^p$-Fisher--Rao ($L^p$-FR) metric for $p=1/(2-\alpha)$, or be non-metric. 
The case $\alpha=1$ on $\Prob$ involves a non-invariant Riemannian metric, which is slightly different from the $\alpha$-Fisher--Rao metric on $\Dens$.} 
\end{figure}

\subsection*{Structure of the paper}
Section~\ref{sec:background} gives some background on the spaces of densities considered in the paper, as well as the Fisher--Rao metric and the $\alpha$-connections. Section~\ref{sec:dens} is devoted to the case of positive densities: we introduce the $\alpha$-Fisher--Rao metrics, use them to give a Riemannian interpretation of the $\alpha$-connections on $\Dens$,  and apply this to the Proudman--Johnson equations. Finally, Section~\ref{sec:prob} concerns the case of probability densities: we study the Riemannian property of the $\alpha$-connections on $\Prob$, their geodesic convexity, and consider the finite-dimensional setting. 

\subsection*{Acknowledgments}
This work originates from a question asked by Klas Modin during the program ``Infinite-dimensional Geometry: Theory and Applications'' at the Erwin Schrödinger International Institute for Mathematics and Physics (ESI), which MB and ALB attended. The authors sincerely thank the organizers of the program and the ESI for fostering a stimulating environment that encouraged fruitful discussions and enabled this work. Furthermore, we want to express our gratitude to both Klas Modin and Steve Preston, who provided valuable insights and comments during the process of writing this manuscript.
This work was written when CM was visiting the University of Toronto and the Fields Institute; CM is grateful
for their hospitality. 
CM was partially supported by ISF grant 2304/24.
MB and CM were partially supported by BSF grant 2022076. MB was partially supported by NSF Grant CISE-2426549.

\section{Background: Spaces of densities, the Fisher--Rao metric and Amari--\v{C}encov 
 connections}\label{sec:background}
\subsection{The space of densities as an infinite dimensional manifold}
Throughout this article, unless otherwise mentioned, we let $M$ be a closed, oriented, $n$-dimensional manifold. 
We denote by $\Dens$ the space of smooth positive densities and by $\Prob$ the subspace of smooth probability densities, i.e., 
\begin{align*}
\Dens&:=\{\mu\in \Omega^n(M) : \mu>0\}\\
\Prob&:=\{\mu\in \Dens : \int\mu=1\}.
\end{align*}
Here, $\Omega^n(M)$ is the space of all $n$-forms on $M$, and the positivity means the positivity of the Radon--Nikodym derivative $\frac{\mu}{\lambda}$ of $\mu$ with respect to some volume form $\lambda$ on $M$ (say, of some Riemannian metric).
Since $\Dens$ is an open subset of the Fr\'echet space $\Omega^n(M)$ it carries the structure of a Fr\'echet manifold with tangent space $T_{\mu}\operatorname{Dens}(M)=\Omega^n(M)$.
Similarly, as a linear subspace of a Fr\'echet manifold,  the space of probability densities is a Fr\'echet manifold, where the tangent space is given by 
\begin{align*}
T_{\mu}\Prob=\left\{a \in \Omega^n(M):\int a=0\right\}.
\end{align*}

On both the space of  densities and probability densities
we can consider the pushforward action of the diffeomorphism group $\Diff$.  On $\Dens$ it is given by
\begin{equation}\label{eq:pushforward}
\Diff\times \Dens\ni (\varphi,\mu)\mapsto \varphi_*\mu \in \Dens
\end{equation}
and, since the pushforward by a diffeomorphism is volume preserving, this action restricts to an action on the space of probability densities. 
By a result of Moser~\cite{moser1965volume} the action on the space of probability densities is transitive, which allows us to identify the space of probability densities with the quotient
\begin{equation*}
\Prob\equiv \Diff/\SDiff, 
\end{equation*}
where $\SDiff$ is the group of volume preserving diffeomorphisms of some fixed probability density $\dx$. 
Thus, constructions (metrics, connections, geodesics) on $\Prob$ can be pulled back to $\Diff$ via the map $\varphi\mapsto \varphi_*\dx$.

For $a\in \Omega^n(M)$ and $\mu\in \Dens$, we denote by $\frac{a}{\mu}$ the Radon--Nikodym derivative of $a$ with respect to $\mu$. Given a fixed (probability) density $\lambda\in \Prob$ the map $\mu \mapsto \frac{\mu}{\dx}$ allows us to identify $\Dens$ with positive smooth functions on $M$, and $\Prob$ with the positive smooth functions that integrate to one.

\subsection{The Fisher--Rao metric}
Next we introduce the central object of information geometry: the Fisher--Rao metric, a Riemannian metric defined on the space of (probability) densities. It is an infinite-dimensional version of the historical Fisher--Rao metric, introduced by Rao \cite{rao1945} on parametric families of probability distributions using the Fisher information, in the sense that it induces Rao's metric on the corresponding submanifolds.
\begin{definition}[Fisher--Rao metric]
Given $\mu\in \Dens$ and $a,b\in T_\mu\Dens$ the Fisher--Rao metric on $\Dens$ is given by
\begin{align}\label{Fisher--Rao}
\GFR_{\mu}(a,b)=\int \frac{a}{\mu} \frac{b}{\mu}\mu\;.
\end{align}
Via restriction $\GFR$ induces a Riemannian metric on $\Prob$, which we denote by the same letter.
\end{definition}
The Fisher--Rao metric is invariant under the action of the diffeomorphism group, i.e.,
$$G_{\varphi_*\mu}(\varphi_*a,\varphi_*b)=G_{\mu}(a,b)$$ for all (probability) densities $\mu$, tangent vectors $a,b$ and diffeomorphisms $\varphi$. 
\v{C}encov's theorem states that, on the space of probability densities, it is indeed the unique Riemannian metric with this property\footnote{In the setting of the present article this requires that $\operatorname{dim}(M)>1$, see~\cite{bauer2016uniqueness}; assuming invariance under the larger set of all sufficient statistics, as used, e.g., in~\cite{ay2015information}, this condition is not needed.}, cf.~\cite{cencov1982,ay2015information,bauer2016uniqueness}.

\subsection{The Amari--\v{C}encov $\alpha$-connections on $\Dens$}
Now we introduce the $\alpha$-connections on the space $\Dens$, a family of affine connections that are dual with respect to the Fisher--Rao metric, and such that the $0$-connection is the Levi-Civita connection. We will follow the presentation in~\cite{bauer2024p}. In the finite-dimensional case, i.e., when $M$ is a finite set, the definitions below coincide with the classical ones, see, e.g., \cite{amari2000methods,ay2015information}.
\begin{definition}[$\alpha$-connections on $\Dens$]\label{lem:alphacon}
    For any $\alpha\in \mathbb R$, the $\alpha$-connection $\nabla^{(\alpha)}$ on $\Dens$ is given by
    \begin{equation}\label{alpha-dens}
    \nabla^{(\alpha)}_a b = Db.a - \frac{1+\alpha}{2} \frac{a}{\mu}b,\quad a,b\in T_\mu\Dens.
    \end{equation}
    Here $Db.a|_{\mu}:=D_\mu b(a_\mu)$ denotes the directional derivative of the vector field $b$ in the direction given by $a_\mu$. 
\end{definition}
The easiest way to read this definition (and similar formulae below) is to consider again the identification of densities and positive functions via $\mu\mapsto \mu/\dx$. 
The $\alpha=-1$ and $\alpha=1$ connections are called the mixture and exponential connections, respectively. In the following lemma we will collect several key properties for the $\alpha$-connections on the space  $\Dens$.
\begin{lemma}[Properties of $\alpha$-connections on $\Dens$]\label{lem:alphacon_dens}
Let $\nabla^{(\alpha)}$ be 
as defined in~\eqref{alpha-dens}. We have:
\begin{enumerate}[label=$(\alph*)$]
\item\label{dens_prop1}For any $\alpha\in \mathbb R$, 
$\nabla^{(\alpha)}$ is invariant under the action \eqref{eq:pushforward} of $\Diff$ on $\Dens$. 
\item For any $\alpha\in \R$, $\nabla^{(\alpha)}$ is torsionless.
\item\label{dens_prop2} For $\alpha\in \mathbb R$, $\nabla^{(\alpha)}$ and $\nabla^{(-\alpha)}$ are dual connections on $(\Dens,\GFR)$, i.e.,
\[
D(G(a,b)).c = G(\nabla^{(\alpha)}_c a,b) + G(a,\nabla^{(-\alpha)}_c b).
\]
For $\alpha=0$ we obtain that $\nabla^{(0)}$ is self-dual (i.e., metric) and thus coincides with the Levi-Civita connection of the Fisher--Rao metric.  
\item\label{dens_prop3} For $\alpha\in (-1,1)$, $\nabla^{(\alpha)}$ 
can be interpreted as the connection associated to the $\alpha$-divergence $D^{(\alpha)}$, i.e.,
$$G(\nabla_a^{(\alpha)}b,c)=-\left.\partial_\mu(\partial_\mu\partial_\nu D^{(\alpha)}(\mu||\nu)[b,c])[a]\right|_{\nu=\mu},$$
where $D^{(\alpha)}:\Dens\times\Dens\to\mathbb R$ is defined  via
\begin{equation}\label{eq:alphadiv}
    D^{(\alpha)}(\mu||\nu) = \frac{2}{1-\alpha}\int_M \nu + \frac{2}{1+\alpha}\int_M \mu -\frac{4}{(1-\alpha)(1+\alpha)}\int_M \left(\frac{\mu}{\dx}\right)^\frac{1-\alpha}{2}\left(\frac{\nu}{\dx}\right)^{\frac{1+\alpha}{2}}\dx.
\end{equation}
\item\label{dens_prop4} For $\alpha\in (-1,1)$,  $\nabla^{(\alpha)}$ can be identified with the Chern connection of an invariant Finsler metric on $\Dens$, called the $L^p$-Fisher--Rao metric, where $p=\frac{2}{1-\alpha}$.
\item\label{dens_prop5} 
For $\alpha\in (-1,1)$ and any initial conditions $\mu_0\in \Dens$ and $a\in T_\mu \Dens$, the unique $\alpha$-connection geodesic $\mu:[0,T)\to \Dens$
    defined on its maximal interval of existence $[0,T)$ is given by
    \begin{equation*}
    \mu(t)=\left(\left(\frac{\mu_0}{\dx}\right)^{\frac{1-\alpha}{2}}+
    t\left(\frac{\mu}{\dx}\right)^{-\frac{1+\alpha}{2}}\frac{a}{\dx}\right)^{\frac{2}{1-\alpha}} \dx.
    \end{equation*}
    The geodesic $\mu(t)$ exists for all time $t$, i.e., $T=\infty$, if and only if $\frac{a}{\dx}(x)\geq0$ for all $x\in M$.
    Thus $\nabla^{(\alpha)}$ is  geodesically incomplete since there exist geodesics that leave $\Dens$ in finite time. 
\end{enumerate}
\end{lemma}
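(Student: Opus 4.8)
The plan is to work throughout in the \emph{function chart}: writing $\mu = f\dx$ with $f = \mu/\dx>0$ and representing $a\in T_\mu\Dens=\Omega^n(M)$ by the function $a/\dx$, the open cone $\Dens$ sits inside the vector space $\Omega^n(M)$, the directional derivative $D$ is the flat, torsion-free connection of that vector space, and \eqref{alpha-dens} exhibits $\na$ as $D$ plus the pointwise bilinear correction $C_\mu(a,b)=\tfrac{1+\alpha}{2}\tfrac{a}{\mu}\,b$, which is manifestly \emph{symmetric} in $(a,b)$ since $\tfrac{a}{\mu}b=\tfrac{a}{\mu}\tfrac{b}{\mu}\mu=\tfrac{b}{\mu}a$. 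Parts (a) and (b) are then immediate. For (b), torsion-freeness follows because $Db.a-Da.b=[a,b]$ on a vector space and the symmetric term $C$ cancels in $\na_ab-\na_ba-[a,b]$. For (a), I would use that the pushforward $\varphi_*\colon\Omega^n(M)\to\Omega^n(M)$ is linear, so $D(\varphi_*b).(\varphi_*a)=\varphi_*(Db.a)$, together with $\tfrac{\varphi_*a}{\varphi_*\mu}=(a/\mu)\circ\varphi^{-1}$, which gives $C_{\varphi_*\mu}(\varphi_*a,\varphi_*b)=\varphi_*C_\mu(a,b)$; adding the two yields equivariance.

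For the duality in (c) I would simply differentiate $\GFR_\mu(a,b)=\int \tfrac{a}{\mu}\tfrac{b}{\mu}\mu=\int\tfrac{(a/\dx)(b/\dx)}{f}\dx$ in a direction $c$ and expand. The product rule produces a ``derivative of $a$'' term, a ``derivative of $b$'' term, and a ``$-\tfrac{(a/\dx)(b/\dx)(c/\dx)}{f^2}$'' term; writing out $\GFR(\na_c a,b)+\GFR(a,\nabla^{(-\alpha)}_cb)$ in the same chart reproduces the first two terms exactly, while the two correction terms combine with coefficient $\tfrac{1+\alpha}{2}+\tfrac{1-\alpha}{2}=1$ to give precisely the third. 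Setting $\alpha=0$ makes $\na$ self-dual, hence metric, and being also torsion-free it is the Levi-Civita connection of $\GFR$. For (d) I would invoke the $p$-root map $\mu\mapsto (\mu/\dx)^{1/p}$ with $p=2/(1-\alpha)$: by the computation in (e) it carries $\na$-geodesics to straight lines and, as shown in \cite{bauer2024p}, is an isometry from the $L^p$-Fisher--Rao metric to a base-point-independent $L^p$-Minkowski norm on the space of functions. Since the Chern connection of a constant Minkowski norm is the flat connection and Chern connections are natural under isometries, pulling back recovers $\na$.

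The substantive computation is (e). The geodesic equation $\na_{\dot\mu}\dot\mu=0$ is, pointwise in $x\in M$ and in the function chart, the scalar ODE
\[
\ddot f-\frac{1+\alpha}{2}\,\frac{\dot f^{2}}{f}=0 .
\]
I would linearize it by the substitution $g=f^{(1-\alpha)/2}=f^{1/p}$: using the identity $p-1=\tfrac{1+\alpha}{2}p=\tfrac{1+\alpha}{1-\alpha}$, a direct computation collapses the ODE to $g\,\ddot g=0$, hence $\ddot g=0$ on $\{g>0\}$, so $g$ is affine in $t$. Reading off the initial data $g(0)=(\mu_0/\dx)^{(1-\alpha)/2}$ and $\dot g(0)=\tfrac{1-\alpha}{2}(\mu_0/\dx)^{-(1+\alpha)/2}(a/\dx)$ and raising $g(t)$ to the power $2/(1-\alpha)$ gives the stated formula (uniqueness being inherited from the pointwise ODE). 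For the completeness claim I would note that $\mu(t)\in\Dens$ exactly when $g(t,x)>0$ for all $x$; as $g$ is affine in $t$ and $M$ is compact, the forward geodesic exists for all $t\ge0$ iff $\dot g(0)\ge0$ everywhere, and since $\tfrac{1-\alpha}{2}>0$ this is equivalent to $a/\dx\ge0$ everywhere. Otherwise $g(\,\cdot\,,x)$ vanishes at the finite time $T=\min\{-g(0,x)/\dot g(0,x):\dot g(0,x)<0\}>0$, at which $\mu(t)$ leaves $\Dens$, giving geodesic incompleteness.

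The only genuinely non-mechanical step is the Chern-connection identification in (d), which needs the Finsler facts that a constant Minkowski norm has flat Chern connection and that Chern connections transform naturally under the isometry induced by the $p$-root map; everything else is bookkeeping once the chart $f=\mu/\dx$ and the linearizing substitution $g=f^{(1-\alpha)/2}$ are in place, and indeed that single substitution drives both (d) and (e).
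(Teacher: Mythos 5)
Your proposal is correct and follows essentially the same route as the paper: the paper's own proof of (a)--(c) is literally ``direct calculation'' (which you carry out in the chart $f=\mu/\dx$), and (d)--(e) are quoted from \cite{bauer2024p}, whose mechanism --- the linearizing substitution $g=f^{(1-\alpha)/2}$, i.e.\ the $p$-root map --- is exactly what you use; it reappears in this paper as the map $\Phi_\alpha$ of Theorem~\ref{thm:explicit_dens}. For (d), note that your ``Finsler facts'' are less innocent than they sound: the $L^p$-norm for $p\neq 2$ is not strongly convex, so defining and identifying the Chern connection requires the careful treatment of \cite{bauer2024p}; since both you and the paper ultimately defer to that reference, this is fine as a citation but would be a gap as a self-contained argument.

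One point to correct in (e): your computation is right, but it does not literally ``give the stated formula''. With $g(t)=g(0)+t\dot g(0)$ and your initial data, raising to the power $2/(1-\alpha)$ yields
\begin{equation*}
\mu(t)=\left(\left(\frac{\mu_0}{\dx}\right)^{\frac{1-\alpha}{2}}+t\,\frac{1-\alpha}{2}\left(\frac{\mu_0}{\dx}\right)^{-\frac{1+\alpha}{2}}\frac{a}{\dx}\right)^{\frac{2}{1-\alpha}}\dx,
\end{equation*}
i.e.\ with an extra constant $\frac{1-\alpha}{2}$ in the $t$-coefficient. The formula printed in the lemma (which also writes $\mu$ where $\mu_0$ is meant) has $\dot\mu(0)=\frac{2}{1-\alpha}a$, so it is the same curve only after the linear time rescaling $t\mapsto\frac{2}{1-\alpha}t$. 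Since geodesics are preserved under affine reparametrization, this is harmless --- the global-existence dichotomy in terms of the sign of $\frac{a}{\dx}$ is unaffected --- but you should either carry the factor or state the reparametrization explicitly, rather than asserting that your expression coincides with the printed one.
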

\begin{proof}
Properties~\ref{dens_prop1}--\ref{dens_prop3} follow by direct calculation, whereas properties~\ref{dens_prop4}--\ref{dens_prop5} have been shown in our previous article~\cite{bauer2024p}. For similar results in the finite dimensional setting we refer to~\cite[Section 2.5.2.]{ay2017information}.
\end{proof}
\subsection{The Amari--\v{C}encov $\alpha$-connections on $\Prob$}
Next we introduce the $\alpha$-connections on the space of probability densities, which we will denote by  $\nablabar^{(\alpha)}$. 
These are obtained from the $\alpha$-connections on $\Dens$, by projecting them on $\Prob$ with respect to the Fisher--Rao metric. Explicitly, we have: 
\begin{definition}[$\alpha$-connections on $\Prob$]
 For any $\alpha\in \mathbb R$, the $\alpha$-connection $\nablabar^{(\alpha)}$ on $\Prob$ is given by
    \begin{equation}\label{alpha-prob}
    \nablabar^{(\alpha)}_a b = Db.a - \frac{1+\alpha}{2} \left(\frac{a}{\mu}b -\left(\int_M \frac{a}{\mu}\frac{b}{\mu} \mu\right)\mu\right).
    \end{equation}
\end{definition}
For finite sample spaces this formula for the $\alpha$-connection is well-known (e.g., \cite[Section~2.5.2]{ay2017information});
in infinite dimensions formula \eqref{alpha-prob} agrees with the formula (22) in \cite{lenells2014amari}, under the identification of $\Prob = \Diff/\SDiff$. In the following lemma we will collect several key properties for the $\alpha$-connections on $\Prob$:
\begin{lemma}[Properties of $\alpha$-connections on $\Prob$]
Let $\nablabar^{(\alpha)}$ be as defined in~\eqref{alpha-prob}. We have:
\begin{enumerate}[label=$(\alph*)$]
\item\label{prob_prop1} For any $\alpha\in \mathbb R$, 
$\nablabar^{(\alpha)}$ is invariant under the action \eqref{eq:pushforward} of $\Diff$ on $\Prob$. 
\item For any $\alpha\in \R$, $\nablabar^{(\alpha)}$ is torsionless.
\item\label{prob_prop2} For $\alpha\in \mathbb R$, $\nablabar^{(\alpha)}$ and $\nablabar^{(-\alpha)}$ are dual connections on $(\Prob,\GFR)$. 
For $\alpha=0$ we obtain that $\nablabar^{(0)}$ is self-dual and thus coincides with the Levi-Civita connection of the Fisher--Rao metric.  
\item\label{prob_prop3} For $\alpha\in (-1,1)$, $\nablabar^{(\alpha)}$ 
can be interpreted as the connection associated to the restriction to $\Prob$ of the $\alpha$-divergence $D^{(\alpha)}$, as defined in~\eqref{eq:alphadiv}.
\item\label{prob_prop4} For $\alpha\in \mathbb R$, the connection  $\nablabar^{(\alpha)}$ on $\Prob$ is the orthogonal projection of the connection $\nabla^{(\alpha)}$ on $\Dens$ with respect to the Fisher--Rao metric $\GFR$. 
\end{enumerate}
\end{lemma}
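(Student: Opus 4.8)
The natural plan is to prove part~\ref{prob_prop4} first and then deduce parts \ref{prob_prop1}--\ref{prob_prop3} (and the torsion-freeness) from it together with the corresponding statements of Lemma~\ref{lem:alphacon_dens} on $\Dens$, reserving a separate argument for \ref{prob_prop3}. The whole proof is organized around the $\GFR$-orthogonal splitting of the ambient tangent space. Note that for $v\in T_\mu\Dens$ one has $\GFR_\mu(v,\mu)=\int \frac{v}{\mu}\frac{\mu}{\mu}\mu=\int v$, so that $T_\mu\Dens = T_\mu\Prob \oplus \R\mu$, where $\mu$ is a $\GFR$-unit normal ($\GFR_\mu(\mu,\mu)=\int\mu=1$) and the orthogonal projection onto $T_\mu\Prob$ is $\Pi_\mu v = v - \GFR_\mu(v,\mu)\,\mu = v - \left(\int v\right)\mu$. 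I record for later the two facts that (i) $\Pi_\mu$ fixes vectors of $T_\mu\Prob$ and kills them against the normal, i.e.\ $\GFR_\mu(w,\mu)=0$ for $w\in T_\mu\Prob$, and (ii) $\Pi$ is equivariant for the pushforward action, $\varphi_*\circ\Pi_\mu = \Pi_{\varphi_*\mu}\circ\varphi_*$, which follows from invariance of $\GFR$ since $\Pi_{\varphi_*\mu}(\varphi_* v)=\varphi_* v-\GFR_{\varphi_*\mu}(\varphi_* v,\varphi_*\mu)\varphi_*\mu=\varphi_*\Pi_\mu v$.

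\textbf{Part~\ref{prob_prop4} (projection).} For $a,b\in T_\mu\Prob$ I would first observe that $Db.a\in T_\mu\Prob$: differentiating the constraint $\int b(\gamma(t))=0$ along any curve $\gamma$ in $\Prob$ with $\gamma'(0)=a$ gives $\int Db.a=0$. Since $Db.a$ may be computed along a curve that stays in $\Prob$, the ambient covariant derivative $\nabla^{(\alpha)}_a b = Db.a-\frac{1+\alpha}{2}\frac{a}{\mu}b$ is a well-defined element of $T_\mu\Dens$. Integrating and using $\int Db.a=0$,
\[
\int \nabla^{(\alpha)}_a b = -\frac{1+\alpha}{2}\int \frac{a}{\mu}b = -\frac{1+\alpha}{2}\int \frac{a}{\mu}\frac{b}{\mu}\mu = -\frac{1+\alpha}{2}\,\GFR_\mu(a,b).
\]
Substituting into $\Pi_\mu(\nabla^{(\alpha)}_a b)=\nabla^{(\alpha)}_a b-\left(\int \nabla^{(\alpha)}_a b\right)\mu$ reproduces exactly formula~\eqref{alpha-prob}, proving $\nablabar^{(\alpha)}=\Pi\circ\nabla^{(\alpha)}$.

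\textbf{Parts~\ref{prob_prop1}--\ref{prob_prop2} and torsion-freeness.} These now follow formally. For invariance, equivariance of $\Pi$ together with $\Diff$-invariance of $\nabla^{(\alpha)}$ on $\Dens$ (Lemma~\ref{lem:alphacon_dens}\ref{dens_prop1}) gives $\varphi_*(\nablabar^{(\alpha)}_a b)=\varphi_*\Pi_\mu\nabla^{(\alpha)}_a b=\Pi_{\varphi_*\mu}\nabla^{(\alpha)}_{\varphi_* a}(\varphi_* b)=\nablabar^{(\alpha)}_{\varphi_* a}(\varphi_* b)$. For torsion-freeness, note that the bracket of vector fields tangent to the submanifold $\Prob$ is tangent, hence fixed by $\Pi$; therefore the torsion of $\nablabar^{(\alpha)}$ equals $\Pi\big(\nabla^{(\alpha)}_a b-\nabla^{(\alpha)}_b a-[a,b]\big)=\Pi(0)=0$ by torsion-freeness of $\nabla^{(\alpha)}$. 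For duality, I would restrict the identity of Lemma~\ref{lem:alphacon_dens}\ref{dens_prop2} to $a,b,c\in T_\mu\Prob$ and replace $\nabla^{(\pm\alpha)}$ by $\nablabar^{(\pm\alpha)}$ at no cost: since $b\perp_{\GFR}\mu$, one has $\GFR(\nablabar^{(\alpha)}_c a,b)=\GFR(\Pi\nabla^{(\alpha)}_c a,b)=\GFR(\nabla^{(\alpha)}_c a,b)$, and symmetrically for the second slot, so $D(\GFR(a,b)).c=\GFR(\nablabar^{(\alpha)}_c a,b)+\GFR(a,\nablabar^{(-\alpha)}_c b)$; the case $\alpha=0$ gives self-duality, hence the Levi--Civita property.

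\textbf{Part~\ref{prob_prop3} (the main obstacle).} The delicate point is that the connection \emph{induced by the restricted divergence} is the \emph{projection} of the induced ambient connection. I would use Eguchi's divergence formalism: restricting $D^{(\alpha)}$ to $\Prob\times\Prob$ yields an induced metric (from its second mixed derivative on the diagonal) equal to the restriction $\overline{\GFR}$ of $\GFR$, and an induced connection $\nabla^{\bar D}$ determined by $\overline{\GFR}(\nabla^{\bar D}_a b,c)=-\partial_\mu(\partial_\mu\partial_\nu D^{(\alpha)}(\mu\|\nu)[b,c])[a]|_{\nu=\mu}$ for $a,b,c\in T_\mu\Prob$. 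The crux is that this right-hand side is a third-order derivative of $D^{(\alpha)}$ at the diagonal point $(\mu,\mu)$ taken purely in directions tangent to $\Prob$ (extending $b,c$ as vector fields on $\Prob$), so it coincides with the value of the ambient expression, which by Lemma~\ref{lem:alphacon_dens}\ref{dens_prop3} equals $\GFR(\nabla^{(\alpha)}_a b,c)$. Using $c\perp_{\GFR}\mu$ this is $\GFR(\Pi_\mu\nabla^{(\alpha)}_a b,c)=\overline{\GFR}(\nablabar^{(\alpha)}_a b,c)$. Since $\nabla^{\bar D}_a b\in T_\mu\Prob$ and $\overline{\GFR}$ is nondegenerate there, comparing against all $c\in T_\mu\Prob$ yields $\nabla^{\bar D}=\nablabar^{(\alpha)}$. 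The step requiring the most care is justifying that the two first-argument derivatives $\partial_\mu[a]\partial_\mu[b]$ may be computed with $\Prob$-tangent extensions without changing the value relative to the ambient computation; this is where one must argue that only the restriction of $D^{(\alpha)}$ and its intrinsic jet along $\Prob$ enter, rather than transverse behavior.
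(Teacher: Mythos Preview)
Your proof is correct. The paper's own proof is essentially omitted---it says only ``All of these properties follow by direct calculation'' and cites the finite-dimensional analogue in the literature---so there is no detailed argument to compare against. Your approach is more structured than a bare verification: you first establish the projection characterization~\ref{prob_prop4} and then deduce invariance, torsion-freeness, and duality formally from the corresponding ambient properties (Lemma~\ref{lem:alphacon_dens}) together with the $\GFR$-equivariance of the projection $\Pi$. This is cleaner than checking each identity by hand, and the projection computation in~\ref{prob_prop4} is exactly right. Your treatment of~\ref{prob_prop3} via the Eguchi formalism is also correct; the point you flag as delicate---that the third-order jet of $D^{(\alpha)}$ along the diagonal in tangential directions agrees with the ambient one---is indeed the only place requiring care, and your argument (pair with $c\in T_\mu\Prob$, use $c\perp_{\GFR}\mu$ to pass from $\nabla^{(\alpha)}$ to its projection) handles it.
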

\begin{proof}
All of these properties follow by direct calculation. For similar results in the finite dimensional setting we refer to~\cite[Section 2.5]{ay2017information}. 
\end{proof}

\begin{remark}
Note that the Finsler structure associated to the $\alpha$-connections no longer holds on $\Prob$, expect for the trivial case $\alpha=0$.
\end{remark}

\section{The $\alpha$-Fisher--Rao metric and the $\alpha$-connections on $\Dens$}\label{sec:dens}
In this section we will present the first main result of this article: namely, we introduce a new family of Riemannian metrics on the space $\Dens$, which we will construct in such a way that the induced Levi-Civita connections coincide with the $\alpha$-connections. As a by-product of our construction we will extend in Theorem~\ref{thm:explicit_dens} the explicit formula for geodesics from Lemma~\ref{lem:alphacon_dens} to arbitrary~$\alpha\in\mathbb R$. Finally, in Section~\ref{sec:PJnonperiodic} we will extend our construction to the space of densities on the non-compact manifold $M=\mathbb R$, which will allow us to find an interpretation of the non-periodic, generalized Proudman--Johnson equation as an Euler-Arnold equation.

\subsection{A Riemannian interpretation for the $\alpha$-connections}
We  start by defining the $\alpha$-Fisher--Rao metric:
\begin{definition}[$\alpha$-Fisher--Rao metric]
Let $\alpha\in \mathbb R$ and let $\dx$ be a fixed background density on $\Dens$. Given $\mu\in \Dens$ and $a,b\in T_\mu\Dens$ we define the $\alpha$-Fisher--Rao metric via:
\begin{align}\label{p-Fisher--Rao}
G^{\alpha}_{\mu}(a,b)=\int \left(\frac{\mu}{\dx}\right)^{-\alpha-1} \frac{a}{\dx}\frac{b}{\dx}\dx\;.
\end{align}
Note that $\alpha=0$ corresponds to the standard Fisher--Rao metric as defined in~\eqref{Fisher--Rao}, i.e., $G^0=G$.
\end{definition}

\begin{remark}[Invariance of the $\alpha$-Fisher--Rao metric]
Note that $\frac{\mu}{\dx}$ is a strictly positive function and thus the above indeed defines a smooth Riemannian metric, i.e., it is positive definite.
However, the metric is \emph{not} invariant under the action \eqref{eq:pushforward} of $\Diff$ 
unless $\alpha=0$, for which it is equal to the Fisher--Rao metric. 
To see the non-invariance for $\alpha\neq 0$ we just note the dependance on the background density $\lambda$, which is an obstruction to invariance with respect to the full diffeomorphism group. 
These metrics are, however, invariant under the action of $\operatorname{Diff}_{\lambda}(M)$, the group of all $\lambda$-preserving diffeomorphisms. 
This can be easily seen by applying a $\lambda$-preserving diffeomorphism $\varphi$, using that $\lambda = \varphi^*\lambda$ and then applying a change of variables by $\varphi$ in the integral. 
Note that this is analogous to the situation of the $L^2$-Wasserstein metric in optimal transport, which corresponds to a $\operatorname{Diff}_{\lambda}(M)$-invariant metric on $\Diff$, that is not invariant under the action of the full diffeomorphism group. 
\end{remark}

Next, we show that the $\alpha$-Fisher--Rao metric satisfies the desired property, i.e., we show that the covariant derivative 
associated to the $\alpha$-Fisher--Rao metric on $\Dens$ is equal to the Amari--\v{C}encov $\alpha$-connection.
\begin{theorem}[Levi-Civita connection of the $\alpha$-Fisher--Rao metric on $\Dens$]
\label{thm:alphaFR_connection_dens}
For any $\alpha\in \mathbb R$, the Levi-Civita connection of the $\alpha$-Fisher--Rao metric on the space of densities $\Dens$ is given by
\begin{equation*}
\nabla_a b = Db.a  -\frac{1+\alpha}{2} \frac{a}{\mu}\frac{b}{\mu} \mu,
\end{equation*}
which coincides with the  $\alpha$-connection as introduced in Definition~\ref{lem:alphacon}.
\end{theorem}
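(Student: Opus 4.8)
The plan is to verify that the stated connection $\nabla$ is indeed the Levi-Civita connection of $G^\alpha$ by checking the two defining properties: metric compatibility and vanishing torsion. Since Lemma~\ref{lem:alphacon_dens}(b) already asserts that $\nabla^{(\alpha)}$ is torsionless, and the formula displayed in the theorem coincides with~\eqref{alpha-dens} (note that $\frac{a}{\mu}b = \frac{a}{\mu}\frac{b}{\mu}\mu$ when we write tangent vectors as densities), the only substantive thing left to prove is metric compatibility, namely
\begin{equation*}
D\left(G^\alpha(b,c)\right).a = G^\alpha\!\left(\nabla_a b, c\right) + G^\alpha\!\left(b, \nabla_a c\right)
\end{equation*}
for all $a,b,c\in T_\mu\Dens$. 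By uniqueness of the Levi-Civita connection, establishing these two properties identifies $\nabla$ as the desired connection.

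I would work throughout in the function picture via the identification $\mu\mapsto f:=\mu/\dx$, so that $G^\alpha_\mu(a,b)=\int f^{-\alpha-1}\,\tfrac{a}{\dx}\tfrac{b}{\dx}\,\dx$ and the connection reads $\nabla_a b = Db.a - \tfrac{1+\alpha}{2}\tfrac{a}{\mu}b$. First I would compute the left-hand side $D\left(G^\alpha(b,c)\right).a$ directly by differentiating the integrand. Two contributions arise: the derivative of the weight $f^{-\alpha-1}$, which by the chain rule produces a factor $-(\alpha+1)f^{-\alpha-2}\tfrac{a}{\dx}$, and the derivatives $Db.a$ and $Dc.a$ acting on the vector fields $b$ and $c$. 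Next I would compute the right-hand side by substituting the connection formula, expanding $G^\alpha(\nabla_a b,c)$ and $G^\alpha(b,\nabla_a c)$, and using that $\tfrac{a}{\mu}=f^{-1}\tfrac{a}{\dx}$ so the correction terms carry the right power of $f$. The plan is then to show the two sides agree: the $Db.a$ and $Dc.a$ terms on the right match the corresponding terms on the left verbatim, while the two correction terms of the form $-\tfrac{1+\alpha}{2}\int f^{-\alpha-1}f^{-1}\tfrac{a}{\dx}\tfrac{b}{\dx}\tfrac{c}{\dx}\,\dx$ combine to exactly cancel the weight-derivative term $-(\alpha+1)\int f^{-\alpha-2}\tfrac{a}{\dx}\tfrac{b}{\dx}\tfrac{c}{\dx}\,\dx$ coming from the left-hand side.

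The computation is essentially a bookkeeping exercise, so rather than a serious obstacle the main point requiring care is the cancellation of the weight-derivative term against the two symmetric correction terms: the factor $\tfrac{1+\alpha}{2}$ appearing twice (once from $\nabla_a b$ and once from $\nabla_a c$) must sum to exactly the $(\alpha+1)$ produced by differentiating $f^{-\alpha-1}$, and matching the powers of $f$ between these terms is where an error would most naturally creep in. It is also worth confirming that the formula in the theorem statement genuinely agrees with Definition~\ref{lem:alphacon}, i.e.\ that writing $\tfrac{a}{\mu}b$ as a tangent vector and writing $\tfrac{a}{\mu}\tfrac{b}{\mu}\mu$ as a density describe the same object; this is immediate but should be noted explicitly so that invoking Lemma~\ref{lem:alphacon_dens}(b) for torsion-freeness is justified. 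Once metric compatibility is verified by the cancellation above, the two Levi-Civita axioms hold and the identification with $\nabla^{(\alpha)}$ follows.
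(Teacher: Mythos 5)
Your proposal is correct, and its computational core is the very same calculation the paper performs, but the logical packaging differs. The paper \emph{derives} the connection: it writes the Koszul formula for the Christoffel map, computes the single variation $\left(D_{\mu,a}G^{\alpha}\right)(b,c)=-(\alpha+1)\int\left(\frac{\mu}{\lambda}\right)^{-\alpha-2}\frac{a}{\lambda}\frac{b}{\lambda}\frac{c}{\lambda}\,\lambda$, uses its total symmetry in $(a,b,c)$ so that the three Koszul terms collapse to one half of a single term, and reads off $\Gamma_\mu(a,b)=-\frac{1+\alpha}{2}\frac{a}{\mu}b$. You instead \emph{verify} the candidate: you take $\nabla^{(\alpha)}$ as given, check metric compatibility by the identical differentiation of the weight $\left(\mu/\lambda\right)^{-\alpha-1}$ (your two correction terms, each carrying $-\frac{1+\alpha}{2}$ and a factor $f^{-1}$ from $\frac{a}{\mu}=f^{-1}\frac{a}{\lambda}$, sum to exactly the weight-derivative term), import torsion-freeness from Lemma~\ref{lem:alphacon_dens}, and conclude by uniqueness of the Levi-Civita connection. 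Both routes are sound and cost essentially the same; yours avoids solving for $\Gamma$ at the price of citing the lemma and the uniqueness statement. One point you should make explicit, since $G^{\alpha}$ is a weak Riemannian metric on a Fr\'echet manifold where existence of Levi-Civita connections can fail: the uniqueness you invoke holds because $G^{\alpha}$ is weakly non-degenerate (i.e., $G^{\alpha}_{\mu}(v,\cdot)=0$ forces $v=0$), which is immediate from its $L^2$-form; this is the same property the paper uses implicitly when it ``reads off'' the Christoffel map from the identity $G^{\alpha}_{\mu}(c,\Gamma_{\mu}(a,b))=G^{\alpha}\left(-\frac{1+\alpha}{2}\frac{a}{\mu}b,c\right)$.
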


\begin{proof}
The proof follows by direct calculation: we have $\nabla_ab=Db.a+\Gamma(a,b)$, where the Christoffel map $\Gamma(a,b)$ can be computed using
\begin{equation*}
G^{\alpha}_{\mu}(c,\Gamma_{\mu}(a,b))= \frac12\left(D_{\mu,a} G^{\alpha}\right)(b,c)+\frac12\left(D_{\mu,b} G^{\alpha}\right)(a,c)-\frac12\left(D_{\mu,c} G^{\alpha}\right)(a,b),
\end{equation*}
where $D_{\mu,a} G^{\alpha}$ denotes the variation of the $\alpha$-Fisher--Rao metric in direction $a$. We calculate
\begin{align*}
\left(D_{\mu,a} G^{\alpha}\right)(b,c)=  
D_{\mu,a}\left(\int \left(\frac{\mu}{\dx}\right)^{-\alpha-1} \frac{b}{\dx}\frac{c}{\dx}\dx\right)
=-\int (\alpha+1)\left(\frac{\mu}{\dx}\right)^{-\alpha-2}\frac{a}{\dx}\frac{b}{\dx}\frac{c}{\dx}\dx\;.
\end{align*}
Thus we get 
\begin{align*}
&\frac12 \left(D_{\mu,a} G^{\alpha}\right)(b,c)+\frac12\left(D_{\mu,b} G^{\alpha}\right)(a,c)-\frac12\left(D_{\mu,c} G^{\alpha}\right)(a,b)\\&\qquad=
-\frac12\int (\alpha+1)\left(\frac{\mu}{\dx}\right)^{-\alpha-2}\frac{a}{\dx}\frac{b}{\dx}\frac{c}{\dx}\dx=
-\frac12\int (\alpha+1)\left(\frac{\mu}{\dx}\right)^{-\alpha-1}\frac{a}{\mu}\frac{b}{\dx}\frac{c}{\dx}\dx\\
&\qquad=G\left(-\frac{\alpha+1}{2} \frac{a}{\mu}b,c\right).
\end{align*}
Consequently we obtain the desired formula for the Christoffel symbols.

\end{proof}
This interpretation of the $\alpha$-connections as Levi-Civita connections of a Riemannian metric leads to a new interpretation of the geodesics of the $\nabla^{(\alpha)}$ as energy-minimizing curves, implying in particular that the corresponding Riemannian energy is conserved along geodesics:
\begin{corollary}[Conservation of the Riemannian Energy]
Let $\alpha\in \mathbb R$ and let $\mu:[0,1]\to \Dens$ be a geodesic of the $G^{\alpha}$-metric (of the $\alpha$-connection, equivalently). 
Then the (instantaneous) Riemannian $G^{\alpha}$-energy
\begin{align*}
E(\mu)=G^{\alpha}_{\mu}(\mu_t,\mu_t)=
\int \left(\frac{\mu}{\dx}\right)^{-\alpha-1} \frac{\mu_t}{\dx}\frac{\mu_t}{\dx}\dx\
\end{align*}
is constant in time. 
\end{corollary}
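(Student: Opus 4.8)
The plan is to invoke the general Riemannian principle that speed is constant along geodesics, which in this setting reduces to combining the metric compatibility of the Levi-Civita connection with the geodesic equation. By Theorem~\ref{thm:alphaFR_connection_dens}, $\nabla^{(\alpha)}$ is the Levi-Civita connection of $G^{\alpha}$ and is therefore compatible with $G^\alpha$; consequently, for any vector fields $X,Y$ along $\mu$,
\begin{equation*}
\frac{d}{dt}\,G^{\alpha}_{\mu}(X,Y)=G^{\alpha}_{\mu}\!\left(\nabla^{(\alpha)}_{\mu_t}X,Y\right)+G^{\alpha}_{\mu}\!\left(X,\nabla^{(\alpha)}_{\mu_t}Y\right).
\end{equation*}
Taking $X=Y=\mu_t$ and using the geodesic equation $\nabla^{(\alpha)}_{\mu_t}\mu_t=0$ gives $\frac{d}{dt}E(\mu)=2\,G^{\alpha}_{\mu}\!\left(\nabla^{(\alpha)}_{\mu_t}\mu_t,\mu_t\right)=0$, which is the assertion. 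Note that the relevant compatibility is with $G^{\alpha}$ (not the Fisher--Rao metric $G$), and this is precisely the content of $\nabla^{(\alpha)}$ being the $G^{\alpha}$-Levi-Civita connection.

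Since the rest of the paper proceeds by direct calculation, I would also give the computation explicitly in the function picture $f:=\mu/\dx$, $v:=\mu_t/\dx$, which makes the statement self-contained. Here $E(\mu)=\int f^{-\alpha-1}v^{2}\,\dx$ with $f_t=v$, and, writing out $\nabla^{(\alpha)}_{\mu_t}\mu_t=\mu_{tt}-\frac{1+\alpha}{2}\frac{\mu_t}{\mu}\frac{\mu_t}{\mu}\mu$ as in Theorem~\ref{thm:alphaFR_connection_dens}, the geodesic equation becomes $v_t=\frac{1+\alpha}{2}\,v^{2}/f$. Differentiating $E$ under the integral sign, substituting $f_t=v$ and then the expression for $v_t$, the integrand becomes $-(1+\alpha)f^{-\alpha-2}v^{3}+(1+\alpha)f^{-\alpha-2}v^{3}$, which vanishes pointwise; hence $\frac{d}{dt}E(\mu)=0$.

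There is no serious obstacle here, as conservation of energy is a standard consequence of metric compatibility; the main content of the corollary is really that it \emph{applies}, which is secured by Theorem~\ref{thm:alphaFR_connection_dens}. The only point meriting a word of justification is the interchange of differentiation and integration in the infinite-dimensional setting; but $M$ is compact, all quantities are smooth, and $f=\mu/\dx$ is bounded away from zero on the interval $[0,1]$ on which the geodesic is defined, so the integrand and its $t$-derivative are jointly continuous on $[0,1]\times M$ and dominated convergence applies without difficulty.
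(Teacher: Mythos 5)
Your proof is correct and follows exactly the route the paper intends: the corollary is stated without proof precisely because, once Theorem~\ref{thm:alphaFR_connection_dens} identifies $\nabla^{(\alpha)}$ as the Levi-Civita connection of $G^{\alpha}$, conservation of energy is the standard consequence of metric compatibility plus the geodesic equation, which is your first paragraph. The explicit pointwise computation in the function picture is a correct (and harmless) supplement, but it adds nothing beyond the argument the paper already takes for granted.
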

Next we show that the $\alpha$-Fisher--Rao metric can be obtained as the pullback of the flat $L^2$-metric on the space of smooth functions, thereby leading to explicit formulas for geodesics on the space of densities:

\begin{theorem}\label{thm:explicit_dens}
For fixed $\dx$ we consider the mapping $\Phi_{\alpha}: 
\Dens \to C^{\infty}(M)$, defined via 
\begin{equation}\label{eq:Phi}
\Phi_{\alpha}: \begin{cases}
\mu &\mapsto \log\left(\frac{\mu}{\dx}\right), \qquad\qquad\,\alpha = 1\\
\mu &\mapsto \frac{2}{|1-\alpha|}\left(\frac{\mu}{\dx}\right)^{\tfrac{1-\alpha}2}, \qquad \alpha \neq 1.
\end{cases}
\end{equation}
Then the pullback via $\Phi_{\alpha}$  of the $L^2(\dx)$ Riemannian metric on $C^{\infty}(M)$ is the $\alpha$-Fisher--Rao metric. Furthermore, for any $\alpha\neq 1$ the image $\Phi_{\alpha}(\Dens)$ is the space of all smooth, positive functions, whereas for $\alpha=1$ it coincides with all of $C^{\infty}(M)$.
In particular, $G^\alpha$ is flat for all $\alpha\in \mathbb{R}$.
\end{theorem}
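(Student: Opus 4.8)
The plan is to establish the three assertions in turn: the metric identity, the description of the image, and flatness. All of them reduce to the chain rule together with the elementary fact that $f\mapsto f^{c}$ (for $c\neq 0$) and $f\mapsto \log f$ are bijections of the positive functions. The only genuine case distinction is at $\alpha=1$, where the exponent $\tfrac{1-\alpha}{2}$ of the power map degenerates and the logarithm takes over.

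First, for the metric pullback I would compute the differential $D_\mu\Phi_\alpha$ directly. Writing $f=\mu/\lambda$ and identifying a tangent vector $a\in T_\mu\Dens=\Omega^n(M)$ with its Radon--Nikodym derivative $a/\lambda$, the chain rule gives for $\alpha\neq 1$
\[
D_\mu\Phi_\alpha(a)=\frac{1-\alpha}{|1-\alpha|}\left(\frac{\mu}{\lambda}\right)^{-\frac{1+\alpha}{2}}\frac{a}{\lambda},
\]
while for $\alpha=1$ one gets $D_\mu\Phi_1(a)=\left(\tfrac{\mu}{\lambda}\right)^{-1}\tfrac{a}{\lambda}$. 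Substituting into the $L^2(\lambda)$ inner product $\langle u,v\rangle=\int_M u\,v\,\lambda$ and noting that the prefactor $\tfrac{1-\alpha}{|1-\alpha|}=\pm1$ squares to $1$, I obtain in both cases
\[
\bigl(\Phi_\alpha^*\langle\cdot,\cdot\rangle_{L^2(\lambda)}\bigr)_\mu(a,b)=\int_M\left(\frac{\mu}{\lambda}\right)^{-(1+\alpha)}\frac{a}{\lambda}\frac{b}{\lambda}\,\lambda=G^{\alpha}_\mu(a,b),
\]
which is exactly the definition of the $\alpha$-Fisher--Rao metric. This is the computational heart of the proof, but it is entirely routine; the only point requiring care is keeping the two cases separate and checking that the sign $\tfrac{1-\alpha}{|1-\alpha|}$ disappears under squaring.

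Second, I would identify the image. For $\alpha\neq 1$ the exponent $\tfrac{1-\alpha}{2}$ is a nonzero real, so $f\mapsto f^{(1-\alpha)/2}$ is a bijection of the positive smooth functions onto themselves, and multiplication by the positive constant $\tfrac{2}{|1-\alpha|}$ preserves positivity; hence $\Phi_\alpha(\Dens)$ is precisely the cone of positive smooth functions. For $\alpha=1$, since $\log\colon(0,\infty)\to\R$ is a bijection, $\Phi_1$ maps $\Dens$ onto all of $C^\infty(M)$. In either case $\Phi_\alpha$ is a bijection onto an open subset of $C^\infty(M)$ with smooth inverse, namely $g\mapsto\bigl(\tfrac{|1-\alpha|}{2}g\bigr)^{2/(1-\alpha)}\lambda$ respectively $g\mapsto e^{g}\lambda$, and is therefore a diffeomorphism onto its image.

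Finally, flatness follows formally. The $L^2(\lambda)$ metric is a constant bilinear form on the vector space $C^\infty(M)$, so its Levi-Civita connection is the flat directional-derivative connection and its curvature vanishes identically; restriction to the open image $\Phi_\alpha(\Dens)$ leaves the curvature zero. Since the first part exhibits $\Phi_\alpha$ as an isometry onto this open subset, and curvature is an isometry invariant, $G^{\alpha}$ is flat for every $\alpha\in\R$. I expect the main conceptual obstacle to be not computational but interpretive: we are in an infinite-dimensional, weak-Riemannian setting, so one must read ``flat'' as ``the Levi-Civita connection of $G^\alpha$ has vanishing curvature.'' Because everything is an honest pullback of the flat connection on a Fr\'echet space via a diffeomorphism onto an open set, no additional analytic input is needed, and this is moreover consistent with the explicit geodesics, which are simply the $\Phi_\alpha$-preimages of straight lines.
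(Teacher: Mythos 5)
Your proposal is correct and follows essentially the same route as the paper: compute the differential $T_\mu\Phi_\alpha(a)=\operatorname{sgn}(1-\alpha)\left(\frac{\mu}{\lambda}\right)^{-\frac{1+\alpha}{2}}\frac{a}{\lambda}$, note that the sign squares away in the $L^2(\lambda)$ pairing to recover $G^\alpha$, and handle $\alpha=1$ analogously. Your spelled-out treatment of the image (bijectivity of the power and log maps on positive functions) and of flatness (isometry onto an open subset of a flat linear space) makes explicit what the paper leaves as ``the remaining statements directly follow,'' but it is the same argument.
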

Note that the maps  $\Phi_\alpha$ as defined here differ from their analogues in \cite{bauer2024p} by the factor $\frac{2}{|1-\alpha|}$.

\begin{proof}
To calculate the pullback metric we only need to calculate the variation of $\Phi_{\alpha}$ at $\mu\in\Dens$ in direction $a\in T_{\mu}\Dens$. We have, for $\alpha\ne 1$:
\begin{equation*}
T_\mu\Phi_{\alpha}(a)=\operatorname{sgn}(1-\alpha)\left(\frac{\mu}{\dx}\right)^{-\tfrac{1+\alpha}2}\frac{a}{\dx},
\end{equation*}
and thus the desired formula follows by observing that 
\begin{equation*}
G^{\alpha}_\mu(a,b)=\int T_\mu\Phi_{\alpha}(a).T_\mu\Phi_{\alpha}(b) \dx.
\end{equation*}
The calculation for $\alpha=1$ is similar.
\end{proof}
Using that geodesics in $C^{\infty}(M)$ w.r.t.\ the $L^2(\dx)$-metric are straight lines, see e.g.~\cite{bruveris20182}, a direct consequence of  the above results are the following explicit formulas for geodesics of all $\alpha$-connections on $\Dens$; for $\alpha\in(-1,1)$ this formula was already shown in~\cite{bauer2024p}.
\begin{corollary}
Let $\alpha \in \mathbb R$. Given any initial conditions $\mu_0\in \Dens$ and $a\in T_\mu \Dens$, the unique $\alpha$-connection geodesic $\mu:[0,T)\to \Dens$
    defined on its maximal interval of existence $[0,T)$ is given by
    \begin{equation*}
    \mu(t)=\begin{cases}
    \operatorname{exp}\left(
    t\left(\frac{\mu_0}{\dx}\right)^{-1}
    \frac{a}{\dx}\right) \dx,\qquad \alpha=1\\
      \left(\left(\frac{\mu_0}{\dx}\right)^{\frac{1-\alpha}{2}}+
    t\left(\frac{\mu_0}{\dx}\right)^{-\frac{1+\alpha}{2}}\frac{a}{\dx}\right)^{\frac{2}{1-\alpha}} \dx, \qquad \alpha\neq 1.   
    \end{cases}
    \end{equation*}
    For $\alpha\neq 1$ the geodesic $\mu(t)$ exists for all time $t$, i.e., $T=\infty$, if and only if $\frac{a}{\dx}(x)\geq0$ for all $x\in M$. 
    Thus for $\alpha\neq 1$, $\nabla^{(\alpha)}$ is  geodesically incomplete since there exist geodesics that leave $\Dens$ in finite time.  For $\alpha=1$ all geodesics exist for all time $t$ and thus $\nabla^{(1)}$  is geodesically complete.    
\end{corollary}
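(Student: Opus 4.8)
The plan is to read off the geodesics from the isometric picture supplied by Theorem~\ref{thm:explicit_dens}, combined with the identification of $\nabla^{(\alpha)}$ as the Levi-Civita connection of $G^\alpha$ in Theorem~\ref{thm:alphaFR_connection_dens}. Indeed, $\Phi_\alpha$ is a diffeomorphism of $\Dens$ onto the open subset $\Phi_\alpha(\Dens)\subseteq C^\infty(M)$, and by Theorem~\ref{thm:explicit_dens} it pulls back the flat $L^2(\dx)$-metric to $G^\alpha$, so $\Phi_\alpha$ is a Riemannian isometry. Since isometries carry Levi-Civita connections to Levi-Civita connections and preserve the affine parameter, they map $\nabla^{(\alpha)}$-geodesics to $L^2(\dx)$-geodesics, which are the affine lines $t\mapsto f_0+tv$ in $C^\infty(M)$ (see \cite{bruveris20182}). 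This bijection between geodesics and lines with prescribed initial data yields both existence and uniqueness, and gives that the maximal $\nabla^{(\alpha)}$-geodesic with $\mu(0)=\mu_0$ and $\dot\mu(0)=a$ is
\begin{equation*}
\mu(t)=\Phi_\alpha^{-1}\!\bigl(\Phi_\alpha(\mu_0)+t\,T_{\mu_0}\Phi_\alpha(a)\bigr),
\end{equation*}
defined precisely for those $t$ for which the argument stays in $\Phi_\alpha(\Dens)$.

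The next step is to make this explicit. For $\alpha\neq 1$ one inverts $\Phi_\alpha$ via $\Phi_\alpha^{-1}(f)=\bigl(\tfrac{|1-\alpha|}{2}f\bigr)^{2/(1-\alpha)}\dx$, and for $\alpha=1$ via $\Phi_1^{-1}(f)=e^{f}\dx$. Substituting the differentials already computed in the proof of Theorem~\ref{thm:explicit_dens}, namely $T_{\mu_0}\Phi_\alpha(a)=\operatorname{sgn}(1-\alpha)\,(\mu_0/\dx)^{-(1+\alpha)/2}\,(a/\dx)$ for $\alpha\neq 1$ and $T_{\mu_0}\Phi_1(a)=(\mu_0/\dx)^{-1}(a/\dx)$, and simplifying, yields the closed-form expressions for $\mu(t)$; a differentiation at $t=0$ recovers $\dot\mu(0)=a$ and fixes the normalization.

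The substantive point is the maximal interval of existence and the resulting completeness dichotomy, which is governed entirely by the image $\Phi_\alpha(\Dens)$. For $\alpha=1$ this image is all of $C^\infty(M)$, a linear space, so the affine line never leaves it; hence $T=\infty$ for every initial datum and $\nabla^{(1)}$ is geodesically complete. For $\alpha\neq 1$ the image is the open positive cone $C^\infty_+(M)$, and $\mu(t)\in\Dens$ exactly while the line $\gamma(t):=\Phi_\alpha(\mu_0)+t\,T_{\mu_0}\Phi_\alpha(a)$ stays pointwise positive. For each fixed $x$ the map $t\mapsto\gamma(t)(x)$ is affine with $\gamma(0)(x)>0$, hence remains positive for all $t\geq 0$ if and only if its slope $T_{\mu_0}\Phi_\alpha(a)(x)$ is nonnegative; as the factor $(\mu_0/\dx)^{-(1+\alpha)/2}$ is strictly positive, this is a pointwise sign condition on $a/\dx$ (with its direction fixed by $\operatorname{sgn}(1-\alpha)$, so that it reads $a/\dx\geq 0$ for $\alpha<1$). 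If the condition fails at some point, compactness of $M$ ensures that $T=\inf\{t>0:\min_{x}\gamma(t)(x)=0\}$ is finite and positive, so the geodesic reaches $\partial\Dens$ in finite time and $\nabla^{(\alpha)}$ is incomplete.

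The main obstacle is conceptual rather than computational: one must justify, in the Fr\'echet category, both that $\Phi_\alpha$ is a genuine diffeomorphism onto an open subset—so that the transported connection really is $\nabla^{(\alpha)}$ on the whole image and the isometry-preserves-geodesics principle applies—and that the first exit time of $\gamma$ from $C^\infty_+(M)$ is controlled by the uniform-in-$x$ minimum, which is precisely where compactness of $M$ enters. The bookkeeping of $\operatorname{sgn}(1-\alpha)$ likewise deserves care, since it reverses the set of admissible initial velocities between $\alpha<1$ and $\alpha>1$.
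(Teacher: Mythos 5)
Your proposal is correct and is exactly the paper's own route: the corollary is presented there as a direct consequence of Theorem~\ref{thm:alphaFR_connection_dens} and Theorem~\ref{thm:explicit_dens}, i.e., $\Phi_\alpha$ is an isometry of $(\Dens,G^\alpha)$ onto its image with the flat $L^2(\dx)$-metric, whose geodesics are straight lines, which are then pulled back and analyzed for positivity --- precisely your argument. In fact your bookkeeping is more careful than the printed statement: differentiating the corollary's displayed formulas gives $\dot\mu(0)=\tfrac{2}{1-\alpha}\,a$ for $\alpha\neq 1$ (and $\mu(0)=\dx$ when $\alpha=1$), so those formulas are off by a normalization relative to the stated initial conditions, and your observation that the forward-completeness criterion, expressed in terms of the true initial velocity, reads $\frac{a}{\dx}\geq 0$ for $\alpha<1$ but flips to $\frac{a}{\dx}\leq 0$ for $\alpha>1$ is indeed what the isometry argument yields.
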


\subsection{The Proudman--Johnson equations and the non-invariant $\dot H^1$-metric on $\DiffR$}\label{sec:PJnonperiodic}
We conclude this section by describing an analogous construction on the group of diffeomorphisms on the real line, which turns out to be related to the family of inviscid, generalized Proudman--Johnson equations with parameter $\alpha$ (henceforth gPJ equation), given by
\begin{equation*}\label{eq:pj}
u_{txx} + (2-\alpha)u_xu_{xx} +uu_{xxx} = 0, 
\end{equation*}
where $u$ is a function on the real line. The interest in these equations can be found in the area of mathematical hydrodynamics, as they contain several prominent PDEs as special cases: the original Proudman--Johnson equation, in which $\alpha = 3$, corresponds to axisymmetric Navier--Stokes equations in $\R^2$; the generalized equations, which were first proposed in \cite{okamoto2000some}, contain several other important special cases, in particular the Hunter--Saxton equation for $\alpha=0$, the $\mu$-Burgers equation $\alpha=-1$, and self-similar axisymmetric Navier--Stokes equations in higher dimensions.
See \cite{Wun11} for further information about the equation and its motivation.

The relation of these equations to the $\alpha$-connections studied here has been first described by Lenells and Misio{\l}ek in the periodic setting~\cite{lenells2014amari}: namely they considered the mapping $\varphi\mapsto \varphi_*d\theta$, which maps the diffeomorphism group of the circle to the space of probability densities on $S^1$. They showed that the pullback of the geodesic equations of the $\alpha$-connections via this mapping yields the periodic gPJ equations with parameter $\alpha$. 
The resulting connection reduces to a Levi-Civita connection of the homogenous $W^{1,2}$-Riemannian metric for $\alpha=0$, 
thus recasting the geometric interpretation of the Hunter--Saxton equation as an Euler--Arnold equation, as described in \cite{lenells2007hunter}. In the non-periodic setting we have shown in~\cite{bauer2022geometric}, that one can interpret the gPJ equations as Euler-Arnold equations of a Finsler-metric if $\alpha\in (-1,1)$.

In this section, we will consider the pullback of the $\alpha$-Fisher--Rao metric to show that one can indeed interpret the whole family of gPJ equations as Euler-Arnold equations, albeit for a non-invariant Riemannian metric. 
We note that the results in this section do not follow directly from the above calculations on $\Dens$ as we previously assumed that the manifold $M$ is compact, whereas here we treat the case $M=\mathbb R$.

We start by introducing an appropriate group of diffeomorphisms on the real line, where we have to pay specific attention to the decay conditions towards infinity: we let $\DiffRshift$ denote the group of diffeomorphisms that converge to the identity as $x\to -\infty$. More precisely, we define
\begin{equation*}
    \DiffRshift:=\left\{\varphi=x+f(x): f'\in W^{\infty,1}(\mathbb R),\; f'>-1, \text{ and }\lim_{x\to-\infty}f(x)=0\right\}\;,
\end{equation*}
where $W^{\infty,1}(\mathbb R)=\cap_{k\in\mathbb{N}} W^{k,1}(\mathbb R)$ is the intersection of all Sobolev spaces. 
The space $\DiffRshift$ has been first considered in~\cite{bauer2014homogeneous} as a modeling space for the Hunter-Saxton equation on the real line and it has been shown to be an infinite-dimensional Lie group with Lie algebra
\begin{equation*}
    \DiffRLie:=\left\{u: u'\in W^{\infty,1}(\mathbb R)\text{ and }\lim_{x\to-\infty}u(x)=0\right\}\;.
\end{equation*}
We are now ready to define the $\alpha\dot H^1$ Riemannian metric, which will yield the desired geometric description for the gPJ equations:

\begin{definition}
Let $\varphi\in \DiffRshift$ and $u\circ\varphi,v\circ\varphi \in T_\varphi\DiffRshift$ with $u,v\in C_1^{\infty}(\R)$.  The $\alpha\dot H^1$-metric is given by
\begin{align*}\label{alpha-doth1}
G^{\alpha\dot H^1}_{\varphi}(u\circ\varphi,v\circ\varphi)=\int \varphi_x^{-\alpha-1}(u\circ\varphi)_x.(v\circ\varphi)_x\;\dx
=\int \varphi_x^{-\alpha+1}(u_x\circ\varphi).(v_x\circ\varphi)\;\dx,
\end{align*}
where $\dx$ denotes the Lebesgue measure on $\mathbb R$. 
Note that the decay conditions on $\varphi, u$ and $v$ guarantee the well-posedness of the above definition. 
\end{definition}
Using the change of coordinate formula it follows that the metric $G^{\alpha\dot H^1}$ is right-invariant if and only if $\alpha=0$.

Next we will show that the $\alpha\dot H^1$-metric can be obtained as a pullback of the $\alpha$-Fisher--Rao metric. 
To this end, we first need to introduce an appropriate space of densities on the real line, which we will define as the image of $\DiffRshift$ under the map $\varphi \mapsto \varphi_*d\theta$. We obtain the space:
\begin{equation*}
\DensR:=\left\{h\dx: (h-1)\in W^{\infty,1}(\mathbb R)\text{ and } h>0\right\}\;.
\end{equation*}
It follows that $\DensR$ is an infinite-dimensional manifold with tangent space
\begin{equation*}
T_{\mu}\DensR:=\left\{r\dx: r\in W^{\infty,1}(\mathbb R)\right\}\;.
\end{equation*}
Using these definitions, it follows that the $\alpha$-Fisher--Rao metric, as defined in~\eqref{p-Fisher--Rao} for densities on compact manifolds, can be extended to $\DensR$, and by direct calculation we obtain the following result:
\begin{lemma}\label{lem:pullback}
Consider the map 
\begin{equation*}
\Theta: \begin{cases}
\DiffRshift&\to\DensR\\
\varphi&\mapsto \varphi_*\dx.
\end{cases}
\end{equation*}
The pullback of the $\alpha$-Fisher--Rao metric via $\Theta$ is equal to the $\alpha\dot H^1$-metric on $\DiffRshift$, i.e., for all $u\circ\varphi,v\circ\varphi\in T_{\varphi}\DiffRshift$ we have
\begin{equation*}
G^{\alpha\dot H^1}_{\varphi}(u\circ\varphi,v\circ\varphi)=(\Theta^*G^{\alpha})_{\varphi}(u\circ\varphi,v\circ\varphi)\;.
\end{equation*}
\end{lemma}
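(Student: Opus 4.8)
The plan is to verify the pullback identity by direct computation, reducing the statement to a change-of-variables check. The key observation is that the map $\Theta(\varphi)=\varphi_*\dx$ has a simple expression in coordinates: if $\varphi = x + f(x)$, then $\varphi_*\dx = h\,\dx$ where $h = (\varphi_x^{-1})\circ\varphi^{-1}$, i.e., $h\circ\varphi = \varphi_x^{-1}$. Equivalently, writing $\mu = \Theta(\varphi)$, we have $\frac{\mu}{\dx}\circ\varphi = \varphi_x^{-1}$, so that $\frac{\mu}{\dx} = \varphi_x^{-1}\circ\varphi^{-1}$. The decay conditions ensure all objects lie in the correct spaces, so the computation is legitimate.

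First I would compute the differential $T_\varphi\Theta$. Given a tangent vector $u\circ\varphi\in T_\varphi\DiffRshift$ (where $u\in\DiffRLie$), I differentiate the relation $\left(\frac{\mu}{\dx}\circ\varphi\right)\varphi_x = 1$ along the variation $\varphi_\epsilon = \varphi + \epsilon(u\circ\varphi)$. Differentiating $\left(\frac{\mu}{\dx}\circ\varphi\right)\varphi_x = 1$ in $\epsilon$ yields an expression for the variation $a := T_\varphi\Theta(u\circ\varphi)\in T_\mu\DensR$ in terms of $u$ and $\varphi$. A short calculation gives $\frac{a}{\dx}\circ\varphi = -\varphi_x^{-2}(u_x\circ\varphi)$, after using that the pushforward variation can be written as $\frac{a}{\dx} = -\left(\frac{\mu}{\dx}\right)'\, (u\circ\varphi^{-1}) - \frac{\mu}{\dx}\,(u_x\circ\varphi^{-1})$ and simplifying, or more directly by differentiating the Jacobian relation. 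I expect the clean form $\frac{a}{\dx}\circ\varphi = -\varphi_x^{-2}(u_x\circ\varphi)$ (possibly up to a sign bookkeeping that I would confirm against the stated metric).

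Next I would substitute into the definition of $G^\alpha$. By Theorem~\ref{thm:alphaFR_connection_dens}'s metric \eqref{p-Fisher--Rao},
\begin{equation*}
(\Theta^*G^\alpha)_\varphi(u\circ\varphi,v\circ\varphi) = G^\alpha_\mu(a,b) = \int \left(\frac{\mu}{\dx}\right)^{-\alpha-1}\frac{a}{\dx}\frac{b}{\dx}\,\dx,
\end{equation*}
where $b := T_\varphi\Theta(v\circ\varphi)$. I then perform the change of variables $x\mapsto\varphi(x)$ in this integral, under which $\dx$ becomes $\varphi_x\,\dx$, and every factor $\left(\frac{\mu}{\dx}\right)$, $\frac{a}{\dx}$, $\frac{b}{\dx}$ is pulled back via precomposition with $\varphi$. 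Using $\frac{\mu}{\dx}\circ\varphi = \varphi_x^{-1}$ and the differentials computed above, the integrand collapses: the factor $\left(\varphi_x^{-1}\right)^{-\alpha-1}\cdot\varphi_x^{-2}(u_x\circ\varphi)\cdot\varphi_x^{-2}(v_x\circ\varphi)\cdot\varphi_x$ simplifies to $\varphi_x^{-\alpha+1}(u_x\circ\varphi)(v_x\circ\varphi)$, which is exactly the second expression in the definition of $G^{\alpha\dot H^1}_\varphi$. This matches the first expression by the same change-of-variables identity already recorded in the definition.

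The main obstacle is not conceptual but bookkeeping: getting the exponents of $\varphi_x$ and the signs in $T_\varphi\Theta$ exactly right, since the pushforward differential involves both the Jacobian variation and a transport term. The cleanest route to avoid error is to work entirely on the $\varphi$-side using the pre-composed identity $\frac{\mu}{\dx}\circ\varphi=\varphi_x^{-1}$ and differentiate it directly, rather than computing $\frac{a}{\dx}$ as a function on the target and then pulling back. I would also remark that the decay conditions defining $\DiffRshift$ and $\DensR$ (membership of $f', u', r$ in $W^{\infty,1}$) guarantee the integrals converge and that $\Theta$ indeed maps into $\DensR$ with the claimed tangent map, which legitimizes the change of variables; this is precisely the point noted in the definition of $G^{\alpha\dot H^1}$ as ensuring well-posedness.
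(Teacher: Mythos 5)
Your outline (compute $T_\varphi\Theta$, then change variables) is the natural one, but the execution contains genuine errors, and the identity cannot actually be verified along the route you chose. First, your formula for the differential is wrong: under the measure-theoretic pushforward you adopt (so that $\frac{\mu}{\dx}\circ\varphi = \varphi_x^{-1}$), the tangent map is given by the continuity equation $T_\varphi\Theta(u\circ\varphi) = -\partial_y\bigl(u\,\tfrac{\mu}{\dx}\bigr)\dx$ with $u$ the Eulerian velocity, and composing with $\varphi$ gives
\begin{equation*}
\frac{a}{\dx}\circ\varphi \;=\; -\varphi_x^{-1}(u_x\circ\varphi) \;+\; \varphi_{xx}\,\varphi_x^{-3}\,(u\circ\varphi);
\end{equation*}
the transport term involving $\varphi_{xx}$ does \emph{not} cancel, so your claimed clean form $-\varphi_x^{-2}(u_x\circ\varphi)$ is false. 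Second, even taking your formula at face value, the exponent bookkeeping fails: $(\varphi_x^{-1})^{-\alpha-1}\cdot\varphi_x^{-2}\cdot\varphi_x^{-2}\cdot\varphi_x = \varphi_x^{\alpha-2}$, not $\varphi_x^{1-\alpha}$ (these agree only when $\alpha=3/2$). Third, and most seriously, with the correct differential the pullback under your convention comes out as
\begin{equation*}
\int \varphi_x^{\alpha}\,\partial_x\!\left(\frac{u\circ\varphi}{\varphi_x}\right)\partial_x\!\left(\frac{v\circ\varphi}{\varphi_x}\right)\dx,
\end{equation*}
which is a \emph{left}-invariant-type metric agreeing with $G^{\alpha\dot H^1}$ only at $\varphi=\operatorname{id}$: for instance, for $\alpha=0$, on a region where $\varphi_x\equiv 2$ and $u$ is supported appropriately, it gives $\tfrac14\int (u\circ\varphi)_x^2\,\dx$ while $G^{0\dot H^1}$ gives $\tfrac12\int (u\circ\varphi)_x^2\,\dx$. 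So no sign or exponent repair can fix the argument as set up.

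The reading of $\Theta$ under which the lemma holds---and the one consistent with the rest of the paper, since it makes $\Phi_\alpha\circ\Theta(\varphi)=\tfrac{2}{|1-\alpha|}\varphi_x^{(1-\alpha)/2}$, which for $\alpha=0$ is precisely the square-root map $\varphi\mapsto 2\sqrt{\varphi_x}$ known to linearize the right-invariant $\dot H^1$-metric---is the Jacobian convention: $\Theta(\varphi)$ is the density on the source whose Radon--Nikodym derivative is $\frac{\Theta(\varphi)}{\dx}=\varphi_x$. With this reading the proof is a one-line substitution, with no change of variables and no transport term: differentiating $\varphi\mapsto\varphi_x\,\dx$ along $\dot\varphi = u\circ\varphi$ gives $T_\varphi\Theta(u\circ\varphi)=(u\circ\varphi)_x\,\dx$, hence
\begin{equation*}
G^{\alpha}_{\Theta(\varphi)}\bigl(T_\varphi\Theta(u\circ\varphi),T_\varphi\Theta(v\circ\varphi)\bigr) = \int \varphi_x^{-\alpha-1}\,(u\circ\varphi)_x\,(v\circ\varphi)_x\,\dx = G^{\alpha\dot H^1}_\varphi(u\circ\varphi,v\circ\varphi).
\end{equation*}
Your instinct to ``work entirely on the $\varphi$-side'' was the right one; the gap is that you instead kept the target-side pushforward, whose differential you then computed incorrectly, and whose pullback in fact yields a different metric.
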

\begin{proof}
This follows by direct calculation.
\end{proof}
We are now prepared to present the main result of this subsection, which, given the above results, should come as no surprise.
\begin{theorem}
Let $\alpha\in \mathbb R$. The generalized Proudman--Johnson equations with parameter $\alpha$ are the Euler-Arnold equations of the (non-invariant) Riemannian metric $G^{\alpha\dot H^1}$ on $\DiffRshift$. Consequently, the generalized-Proudman--Johnson equations are globally well-posed on $\DiffRLie$ if and only if $\alpha=1$. 
\end{theorem}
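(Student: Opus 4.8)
The plan is to push everything through the map $\Theta$ of Lemma~\ref{lem:pullback}, using that it is at once a Riemannian isometry and a carrier of the diffeomorphism-invariant connection $\nabla^{(\alpha)}$.

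First I would upgrade Lemma~\ref{lem:pullback} to the statement that $\Theta\colon(\DiffRshift,G^{\alpha\dot H^1})\to(\DensR,G^{\alpha})$ is an isometric diffeomorphism. In contrast to the compact case, where $\varphi\mapsto\varphi_*\lambda$ has the nontrivial kernel $\SDiff$ and only identifies $\Prob\equiv\Diff/\SDiff$, on the line the stabilizer of $\lambda$ inside $\DiffRshift$ is trivial: a monotone $\varphi=x+f$ with $\varphi_*\lambda=\lambda$ forces $\varphi_x\equiv1$, and $f(-\infty)=0$ then gives $f\equiv0$. Hence $\Theta$ is injective, and since $\DensR$ is by definition its image, it is a bijection; its inverse is obtained explicitly by integrating the density to recover $\varphi^{-1}$, so $\Theta$ is a diffeomorphism of the underlying Fr\'echet manifolds. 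Combined with the metric identity of Lemma~\ref{lem:pullback}, this makes $\Theta$ an isometry.

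Being an isometry, $\Theta$ intertwines the two Levi-Civita connections, so $G^{\alpha\dot H^1}$-geodesics are exactly the $\Theta$-preimages of the $G^{\alpha}$-geodesics. The computation in Theorem~\ref{thm:alphaFR_connection_dens} is pointwise in $x$ and only uses the change-of-variables formula, so it carries over to $\DensR$ once one checks that the $W^{\infty,1}$ decay makes the relevant integrals converge; thus the Levi-Civita connection of $G^{\alpha}$ on $\DensR$ is again $\nabla^{(\alpha)}$, with geodesic equation $\mu_{tt}=\tfrac{1+\alpha}{2}\tfrac{\mu_t}{\mu}\mu_t$, i.e. $h_{tt}=\tfrac{1+\alpha}{2}\,h_t^2/h$ for $h=\mu/\lambda$. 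The computational core is then to rewrite this in the Eulerian velocity $u=\varphi_t\circ\varphi^{-1}\in\DiffRLie$. Here the key structural point is that $\nabla^{(\alpha)}$ is $\Diff$-invariant (Lemma~\ref{lem:alphacon_dens}\ref{dens_prop1}): although $G^{\alpha\dot H^1}$ is not right-invariant, the invariance of the induced connection is precisely what forces the geodesic equation to close as a PDE in $u$ alone. Substituting the continuity relation $h_t=-(hu)_x$ (the linearization of $\mu=\varphi_*\lambda$) into $h_{tt}=\tfrac{1+\alpha}{2}h_t^2/h$ and differentiating twice in $x$ to clear the nonlocal factors yields $u_{txx}+(2-\alpha)u_xu_{xx}+uu_{xxx}=0$. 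This is the same local, algebraic manipulation performed by Lenells and Misio{\l}ek in the periodic setting~\cite{lenells2014amari}; on the line it is identical up to decay bookkeeping, so I would either reproduce it or invoke it directly.

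For the well-posedness dichotomy, note that global well-posedness of the gPJ equation on $\DiffRLie$ is equivalent to the geodesic $\varphi(t)$, hence the velocity $u(t)$, existing for all time, i.e. to geodesic completeness of $(\DiffRshift,G^{\alpha\dot H^1})$; since $\Theta$ is an isometry this is equivalent to completeness of $\nabla^{(\alpha)}$ on $\DensR$. The explicit geodesic formulas (the corollary to Theorem~\ref{thm:explicit_dens}, extended to $\DensR$) decide the issue: for $\alpha\neq1$ the solution $h(t)=\big((h_0)^{(1-\alpha)/2}+t\,(h_0)^{-(1+\alpha)/2}r\big)^{2/(1-\alpha)}$ reaches zero in finite time whenever the initial momentum $r$ is somewhere negative, so incomplete geodesics exist, whereas for $\alpha=1$ the exponential form $h(t)=h_0\exp(t\,h_0^{-1}r)$ stays positive for all $t$. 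Thus completeness, and hence global well-posedness on $\DiffRLie$, holds precisely when $\alpha=1$. I expect the genuine work to lie not in this last step, which is essentially free from the explicit formulas, but in the non-compact analysis of the first two paragraphs — verifying that $\Theta$ is a smooth isometric diffeomorphism between the $W^{\infty,1}$-manifolds and that Theorem~\ref{thm:alphaFR_connection_dens} together with the explicit geodesics survives the passage to $M=\R$ — and in pinning down the exact coefficient $(2-\alpha)$ in the Eulerian reduction.
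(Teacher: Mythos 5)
Your proposal is correct and follows essentially the same route as the paper: the paper's own proof consists precisely of invoking Lemma~\ref{lem:pullback} (the pullback/isometry property of $\Theta$) together with the known equivalence between geodesics of $\nabla^{(\alpha)}$ on $\DensR$ and the gPJ equation, with the global well-posedness dichotomy coming from the explicit geodesic formulas (the corollary to Theorem~\ref{thm:explicit_dens}), exactly as you argue. You simply spell out details the paper leaves implicit --- injectivity of $\Theta$, the intertwining of connections, the Eulerian reduction, and the decay bookkeeping --- which elaborates rather than departs from the paper's approach.
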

\begin{proof}
The proof of this result follows from Lemma~\ref{lem:pullback} and the fact that the geodesic equations of the $\alpha$-connections on $\DensR$ are equivalent to the gPJ equation. 
\end{proof}

\section{The $\alpha$-Fisher--Rao metric and the $\alpha$-connections on $\Prob$}\label{sec:prob}

\subsection{Metricity of the $\alpha$-connections on $\Prob$}
Here we consider the restriction of the $\alpha$-Fisher--Rao metric to a Riemannian metric on $\Prob$.
We will, however, show that, in general, the Levi-Civita connection of this Riemannian metric does not coincide with the Amari-\u{C}encov $\alpha$-connection on this space. 
The difference stems from the fact that the $\alpha$-connection on $\Prob$ is the orthogonal projection of the $\alpha$-connection on $\Dens$ with respect to the Fisher--Rao metric~\cite{bauer2024p}, while the induced connection of the restriction of the $\alpha$-Fisher--Rao metric to $\Prob$ is the orthogonal projection of the $\alpha$-connection on $\Dens$ with respect to the $\alpha$-Fisher--Rao metric. 
These two only coincide for $\alpha=0$ and $\alpha=-1$. Indeed, the main result of this section shows that the Amari-\u{C}encov $\alpha$-connection on $\Prob$ is the Levi-Civita connection of a Riemannian metric if and only if $\alpha\in\{-1,0,1\}$: for $\alpha=0$ and $\alpha=-1$ this Riemannian metric is the restriction of the $\alpha$-Fisher--Rao metric, and for $\alpha=1$ it is a slight variant of the restricition of the $1$-Fisher--Rao metric. We start by calculating the Levi-Civita 
connection of the $\alpha$-Fisher--Rao metric:
\begin{lemma}[Levi-Civita connection of the $\alpha$-Fisher--Rao metric on $\Prob$]\label{thm:geo_alphaFR_prob}
The Levi-Civita connection of the $\alpha$-Fisher--Rao metric on the space of densities $\Prob$ is given by
\begin{equation*}
\tilde \nabla_a b = Db.a  -\frac{\alpha+1}{2} \left( \frac{a}{\mu}b-\frac{\int \frac{a}{\mu}\frac{b}{\mu} \mu}{\int \left(\frac{\mu}{\dx}\right)^{\alpha+1}\dx}\left(\frac{\mu}{\dx}\right)^{\alpha+1}\dx\right),
\end{equation*}
which coincides with the  $\alpha$-connection as defined in Lemma~\ref{lem:alphacon} if and only if $\alpha\in\{0,-1\}$.
\end{lemma}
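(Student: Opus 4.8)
The plan is to use the standard fact that the Levi--Civita connection of a Riemannian submanifold, equipped with the induced metric, is the orthogonal projection (with respect to the ambient metric) of the ambient Levi--Civita connection onto the tangent space of the submanifold. Here $\Prob\subset\Dens$ carries the restriction of $G^\alpha$, and by Theorem~\ref{thm:alphaFR_connection_dens} the ambient Levi--Civita connection of $G^\alpha$ is exactly $\nabla^{(\alpha)}$. Thus, writing $P^\alpha_\mu\colon T_\mu\Dens\to T_\mu\Prob$ for the $G^\alpha$-orthogonal projection, I would show
\[
\tilde\nabla_a b = P^\alpha_\mu\bigl(\nabla^{(\alpha)}_a b\bigr), \qquad a,b\in T_\mu\Prob,
\]
and then simplify the right-hand side.

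The first concrete step is to identify the $G^\alpha$-normal space of $\Prob$ in $\Dens$. A density $n\in T_\mu\Dens$ is $G^\alpha$-orthogonal to every $a$ with $\int a=0$ precisely when $(\mu/\dx)^{-\alpha-1}\tfrac{n}{\dx}$ is constant, i.e.\ when $n$ is a multiple of $(\mu/\dx)^{\alpha+1}\dx$. Hence the normal space is the line spanned by $(\mu/\dx)^{\alpha+1}\dx$, and for $w\in T_\mu\Dens$ the projection reads
\[
P^\alpha_\mu(w)= w-\frac{\int w}{\int(\mu/\dx)^{\alpha+1}\dx}\,(\mu/\dx)^{\alpha+1}\dx,
\]
the scalar being fixed by the requirement $\int P^\alpha_\mu(w)=0$.

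Next I would apply this to $w=\nabla^{(\alpha)}_a b=Db.a-\tfrac{1+\alpha}{2}\tfrac{a}{\mu}b$. The key observation is that if $b$ is extended to a vector field tangent to $\Prob$, then $\int b\equiv0$ along $\Prob$, so differentiating in the direction $a$ gives $\int Db.a=0$. Using also $\int\tfrac{a}{\mu}b=\int\tfrac{a}{\mu}\tfrac{b}{\mu}\mu$, this yields $\int w=-\tfrac{1+\alpha}{2}\int\tfrac{a}{\mu}\tfrac{b}{\mu}\mu$, and substituting into the projection formula produces exactly the claimed expression for $\tilde\nabla_a b$.

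Finally, for the ``if and only if'' I would compare $\tilde\nabla$ with the $\alpha$-connection $\nablabar^{(\alpha)}$ on $\Prob$ from \eqref{alpha-prob}. The two differ only in their correction terms: $\tilde\nabla$ carries $\tfrac{(\mu/\dx)^{\alpha+1}\dx}{\int(\mu/\dx)^{\alpha+1}\dx}$ where $\nablabar^{(\alpha)}$ carries $\mu$, reflecting that $\nablabar^{(\alpha)}$ is the \emph{Fisher--Rao}-orthogonal projection of $\nabla^{(\alpha)}$ (whose normal direction is $\mu$), whereas $\tilde\nabla$ is the \emph{$G^\alpha$}-orthogonal projection (normal direction $(\mu/\dx)^{\alpha+1}\dx$). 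For $\alpha=-1$ the common prefactor $\tfrac{1+\alpha}{2}$ vanishes and both reduce to $Db.a$; for $\alpha=0$ one has $(\mu/\dx)^{\alpha+1}=\mu/\dx$ and $\int(\mu/\dx)\dx=\int\mu=1$, so the two correction terms agree. For any other $\alpha$ the prefactor is nonzero, and choosing a non-constant $\mu/\dx$ shows that $(\mu/\dx)^{\alpha+1}$ is not proportional to $\mu/\dx$ (this would force $(\mu/\dx)^\alpha$ constant), so the normal directions --- and hence the connections --- differ. I expect the main subtlety to be the correct identification of the $G^\alpha$-normal bundle and its contrast with the Fisher--Rao normal bundle, since it is precisely the $\alpha$-dependence of this normal direction that drives the dichotomy; the remaining computations are routine.
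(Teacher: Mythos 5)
Your proposal is correct and takes essentially the same approach as the paper: both identify the $G^\alpha$-orthogonal complement of $T_\mu\Prob$ in $T_\mu\Dens$ as the line spanned by $\left(\frac{\mu}{\dx}\right)^{\alpha+1}\dx$ and obtain $\tilde\nabla$ as the $G^\alpha$-orthogonal projection of the ambient Levi-Civita connection $\nabla^{(\alpha)}$. In fact, you fill in more detail than the paper's terse proof does (the explicit projection formula, the identity $\int Db.a=0$ for fields tangent to $\Prob$, and the case analysis behind the $\alpha\in\{0,-1\}$ dichotomy), all of which the paper leaves implicit after computing the normal space.
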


\begin{proof}
To calculate the induced connection on the submanifold $\Prob$, we only need to calculate the orthogonal projection of the $\alpha$-Fisher--Rao metric. 
For every $\mu\in \Prob$ the tangent space $T_{\mu}\Prob$ at $\mu$  is given by all $\delta \mu$ such that $\int \delta \mu=0$ (independently of $\alpha$). 
Let $a\in T_{\mu}\Dens$. We claim that  any element in the orthogonal complement can be written as $a=C\left(\frac{\mu}{\dx}\right)^{\alpha+1}\dx$ with $C\in \mathbb R$ being a constant. It is easy to see that such $a$ is indeed orthogonal to all $\delta \mu \in T_{\mu}\Prob$ as 
\begin{equation*}
G^{\alpha}_{\mu}(a,\delta \mu)=\int \left(\frac{\mu}{\dx}\right)^{-\alpha-1}  
C\left(\frac{\mu}{\dx}\right)^{\alpha+1} \frac{\dx}{\dx}\frac{\delta \mu}{\dx}\dx=\int \delta \mu=0\;.
\end{equation*}
It remains to show that this really spans all of the complement, but this is clear since for any $a\in T_{\mu}\Dens$
\begin{equation*}
a-C\left(\frac{\mu}{\dx}\right)^{\alpha+1}\dx \in T_{\mu}\Prob,
\,\text{ where }\,\, C=\frac{\int a}{\int \left(\frac{\mu}{\dx}\right)^{\alpha+1}\dx}.
\end{equation*}
Note that if $\alpha=0$, then $C=\int a$ as the denominator integrates to one.
\end{proof}

Next we formulate the main result of this section, which shows that the $\alpha$-connections on $\Prob$ cannot be viewed as Levi-Civita connections of a Riemannian metric if $\alpha \notin \{-1,0,1\}$. Note that the Riemannian interpretation for $\alpha \in \{-1,0\}$ follows directly from the above theorem, but that the case $\alpha=1$ needs a slight adaption of the construction of the corresponding Riemannian metric. 
\begin{theorem}\label{thm:alpha_nonmetric}
The  $\alpha$-connection $\nabar$ on $\Prob$ is the Levi-Civita connection of a Riemannian metric if and only if $\alpha\in\{-1,0,1\}$. More precisely we have:
\begin{itemize}
\item If $\alpha\in \{-1,0\}$ then $\nabar$ is the Levi-Civita connection 
of the restriction of the $\alpha$-Fisher--Rao metric to $\Prob$ .
\item If $\alpha=1$ then $\nabar$ is the Levi-Civita connection 
of the Riemannian metric 
\begin{equation}\label{eq:tildeG1}
\tilde G^1_{\mu}(a,b)=\int_M\frac{a}{\mu}\frac{b}{\mu}\dx -\int_M\frac{a}{\mu}\dx \int \frac{b}{\mu}\dx.
\end{equation}
\item If $\alpha \notin \{-1,0,1\}$ then $\nabar$ is a non-metric affine connection.
\end{itemize}
\end{theorem}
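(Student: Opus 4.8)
The statement splits into three regimes. For $\alpha\in\{-1,0\}$ it is immediate from Lemma~\ref{thm:geo_alphaFR_prob}, which already identifies $\nabar$ with the Levi-Civita connection of the restricted $\alpha$-Fisher--Rao metric exactly in these two cases. For $\alpha=1$ the plan is a direct verification: on the open subset $\Prob$ of the affine hyperplane $\{\mu:\int\mu=1\}$ the Koszul formula reads $\tilde G^1(\Gamma(a,b),c)=\tfrac12[(D_a\tilde G^1)(b,c)+(D_b\tilde G^1)(a,c)-(D_c\tilde G^1)(a,b)]$, and differentiating the two terms of $\tilde G^1$ from \eqref{eq:tildeG1} in the density direction yields a Christoffel map $\Gamma(a,b)$ that reproduces the correction term of $\nablabar^{(1)}$ in \eqref{alpha-prob}; the rank-one term in $\tilde G^1$ is precisely what absorbs the $\mu$-component of that correction. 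One also checks that $\tilde G^1$ is positive definite on $T_\mu\Prob$ (by Cauchy--Schwarz, with equality only for $a\propto\mu$, which is excluded by $\int a=0$).

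The heart of the theorem is the non-metricity for $\alpha\notin\{-1,0,1\}$, where the plan is to produce an algebraic obstruction carried by the curvature. The structural input is that, after the $p$-root change of variables $\mu\mapsto u=(\mu/\dx)^{1/p}$ with $p=\tfrac{2}{1-\alpha}$, the connection $\nabar$ becomes the affine connection induced on the hypersurface $\Sigma=\{u:\int u^p\,\dx=1\}$ in $C^\infty(M)$ by the \emph{radial} transversal field $\xi(u)=u$; explicitly $\nabar_XY=D_XY-\big(\int u^{p-1}(D_XY)\,\dx\big)u$, with $D$ the flat connection. One verifies this identification by pushing \eqref{alpha-prob} forward through the $p$-root map (this was carried out for $\alpha\in(-1,1)$ in \cite{bauer2024p}). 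Since $D_X\xi=X$ is tangent to $\Sigma$, the shape operator is $-\mathrm{Id}$ and the transversal connection form vanishes, so the Gauss equation collapses to the space-form shape
\begin{equation*}
\bar R(X,Y)Z = h(X,Z)\,Y-h(Y,Z)\,X,\qquad h(X,Y)=-(p-1)\int u^{p-2}XY\,\dx,
\end{equation*}
where the affine fundamental form $h$ is obtained by differentiating $\int u^{p-1}Y\,\dx=0$ along $X$, and is definite (hence non-degenerate) for every $p\neq1$.

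With this in hand the obstruction is purely algebraic. Suppose $\nabar$ were the Levi-Civita connection of some metric $g$ on $\Prob$. Then $g$ is parallel, so each curvature endomorphism is $g$-skew: $g(\bar R(X,Y)Z,W)+g(\bar R(X,Y)W,Z)=0$. Inserting the space-form expression and setting $W=Z$ gives $h(X,Z)g(Y,Z)=h(Y,Z)g(X,Z)$ for all $X,Y,Z$; since $h$ is definite this forces $g(\cdot,Z)\propto h(\cdot,Z)$ for each $Z$, and a polarization argument using the symmetry of $g$ and $h$ then yields $g=c\,h$ for a single constant $c$. Hence $\nabar g=0$ would require $\nabar h=0$. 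But differentiating $h(Y,Z)$ along $X$ and using $\int u^{p-1}Z\,\dx=0$ on $\Sigma$ gives the cubic form
\begin{equation*}
(\nabar_X h)(Y,Z) = -(p-1)(p-2)\int u^{p-3}XYZ\,\dx,
\end{equation*}
which is totally symmetric (consistent with the Codazzi equation) and does not vanish identically unless $(p-1)(p-2)=0$, i.e.\ $p\in\{1,2\}$, i.e.\ $\alpha\in\{-1,0\}$. For $\alpha\notin\{-1,0,1\}$ one has $p\notin\{1,2\}$ and $p$ finite, so $\nabar h\neq0$; this contradicts $\nabar h=0$, and therefore no such $g$ exists.

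The step I expect to be the main obstacle is upgrading the pointwise curvature condition to genuine non-metricity, that is, proving that $g$-skewness of every $\bar R(X,Y)$ forces $g\propto h$: this is exactly what rules out \emph{all} Riemannian metrics rather than merely the restricted $\alpha$-Fisher--Rao metric, and it relies on the definiteness of $h$ together with a polarization argument that must be handled with care in the infinite-dimensional Fréchet setting. A secondary technical point will be the rigorous justification of the affine-immersion computations (variations of the integrals, symmetry of $h$ and of the cubic form), together with the separate treatment of $\alpha=1$, where $p=\infty$ and the hypersurface picture degenerates, via the direct Koszul computation above.
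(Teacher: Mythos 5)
Your handling of the two metric regimes matches the paper: $\alpha\in\{-1,0\}$ is quoted from Lemma~\ref{thm:geo_alphaFR_prob}, and your plan for $\alpha=1$ is the same direct Koszul/Christoffel computation with $\tilde G^1$ that the paper carries out. For the non-metric regime, your route to the curvature is genuinely different and attractive: instead of the paper's direct computation (Lemma~\ref{prop:alpha-curvature}), you realize $\nabar$, after the $p$-root map, as the connection induced on the hypersurface $\Sigma$ by the radial transversal field, and read the space-form curvature off the Gauss equation; since, under this identification, $h$ corresponds to $-\frac{p-1}{p^2}\,\GFR$ and $\frac{p-1}{p^2}=\frac{1-\alpha^2}{4}$, this reproduces exactly the paper's formula $R^{(\alpha)}(a,b)c=\frac{1-\alpha^2}{4}\left(\GFR(b,c)a-\GFR(a,c)b\right)$. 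The next step --- skew-symmetry of the curvature operators with respect to any compatible metric forces that metric to be pointwise proportional to $h$ --- is also the paper's first step.

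The genuine gap is in the sentence ``a polarization argument \ldots\ yields $g=c\,h$ for a single constant $c$. Hence $\nabar g=0$ would require $\nabar h=0$.'' The skewness condition is a \emph{pointwise} algebraic identity, so polarization can only give $g_u=c(u)\,h_u$ with a proportionality factor that a priori varies over the manifold; nothing up to that point makes $c$ constant. With $c$ a function, metricity gives
\begin{equation*}
0=\nabar g = dc\otimes h + c\,\nabar h,\qquad\text{i.e.}\qquad \nabar h=-\,d(\log c)\otimes h,
\end{equation*}
which is not $\nabar h=0$, and your cubic-form computation then yields no contradiction. Proving constancy of the conformal factor is precisely where the paper's proof does its real work: for every tangent $a$ it constructs tangent $b,c$ with $\int\frac{a}{\mu}\frac{b}{\mu}\frac{c}{\mu}\mu=0$ but $\GFR_{\mu}(b,c)>0$, which forces the derivative of the factor to vanish; only then does compatibility reduce to $\alpha\int\frac{a}{\mu}\frac{b}{\mu}\frac{c}{\mu}\mu=0$ for all tangent vectors, a contradiction for $\alpha\neq0$. (So your self-diagnosis is slightly off: the polarization step you worry about is routine; the hard point is passing from pointwise to global.) Your framework does admit a short repair from ingredients you already have: since $\nabar h$ is totally symmetric (Codazzi, or your explicit formula) and the displayed identity holds, $dc\otimes h$ must be totally symmetric, i.e.\ $\partial_Xc\,h(Y,Z)=\partial_Yc\,h(X,Z)$; if $dc\neq0$ at some point, choose $Y\neq0$ in its kernel and set $Z=Y$ to get $\partial_Xc\,h(Y,Y)=0$ for all $X$, contradicting definiteness of $h$; hence $c$ is constant and your $\nabar h\neq 0$ argument closes the proof. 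As written, however, this step is missing, and it is the mathematical heart of the theorem. A secondary omission: the claim that the cubic form $\int u^{p-3}XYZ\,\dx$ is not identically zero on $T\Sigma$ is asserted without proof (it is true --- localize a tangent field in a region where a given nonzero tangent $X$ has constant sign --- and an analogous nondegeneracy is implicitly used in the paper's final line as well).
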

\begin{remark}[Interpreting the Riemannian metric for $\alpha=1$ as a pullback metric]
\label{rem:alpha1} As mentioned above, the metric for $\alpha=1$ does not coincide with the (restriction) of the $1$-FR metric as introduced in the previous section. Instead of the pullback via the map $\Phi_1$, one has to slightly adapt the mapping and define $\tilde\Phi_1:\Prob\to C^{\infty}(M)$ via
\begin{equation*}
\tilde\Phi_1(\mu)= \log\left(\frac{\mu}{\lambda}\right)-\int \log\left(\frac{\mu}{\lambda}\right)\lambda.
\end{equation*}
Using this definition, the pullback of the flat $L^2$-metric on $C^{\infty}(M)$ is exactly the metric $\tilde{G}^1$ 
as defined above.
Note that the image of $\tilde{\Phi}_1$ is the linear space of all zero average functions, whose geodesics with respect to the $L^2$-metric are simply straight lines and exist for all time; thus, the geodesics of $\tilde{G}^1$ (i.e., of $\overline{\nabla}^{(1)}$) are the pullback of these straight lines, and the space is geodesically 
complete.
This map and the characterization of the geodesics (though not the Riemannian metric \eqref{eq:tildeG1}) were studied in \cite{lenells2014amari}.
\end{remark}

To prove Theorem~\ref{thm:alpha_nonmetric} we first need to compute the Riemann curvature tensor of $\nabar$.
\begin{lemma}\label{prop:alpha-curvature}
For $a,b,c\in T_{\mu}\Prob$  the Riemann curvature tensor of $\nabar$ on $\Prob$ is given by:
    $$R^{(\alpha)}(a,b)c=\frac{1-\alpha^2}{4}\left(\GFR(b,c)a-\GFR(a,c)b\right),$$
    where $\GFR$ is the Fisher--Rao metric.
\end{lemma}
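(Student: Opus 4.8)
The plan is to compute $R^{(\alpha)}(a,b)c = \overline{\nabla}^{(\alpha)}_a\overline{\nabla}^{(\alpha)}_b c - \overline{\nabla}^{(\alpha)}_b\overline{\nabla}^{(\alpha)}_a c - \overline{\nabla}^{(\alpha)}_{[a,b]}c$ directly, taking advantage of the fact that $T_\mu\Prob = \{a : \int a = 0\}$ is a fixed linear subspace of $\Omega^n(M)$, independent of the base point $\mu$. I would therefore extend $a,b,c$ to \emph{constant} vector fields on $\Prob$, so that $Db.a = 0$ and $[a,b]=0$: the last term in the curvature drops out, and writing the connection as $\overline{\nabla}^{(\alpha)}_a b = Db.a + \Gamma_\mu(a,b)$ with Christoffel map $\Gamma_\mu(a,b) = -\tfrac{1+\alpha}{2}\big(\tfrac{a}{\mu}b - G_\mu(a,b)\mu\big)$, the curvature collapses to
\begin{equation*}
R^{(\alpha)}(a,b)c = D_{\mu,a}\Gamma_\mu(b,c) - D_{\mu,b}\Gamma_\mu(a,c) + \Gamma_\mu\big(a,\Gamma_\mu(b,c)\big) - \Gamma_\mu\big(b,\Gamma_\mu(a,c)\big),
\end{equation*}
where $D_{\mu,a}$ differentiates only the explicit base-point dependence of $\Gamma$.

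To evaluate the four terms I would pass to the function picture $f = \mu/\lambda$, $p = a/\lambda$, $q = b/\lambda$, $r = c/\lambda$, abbreviate $\beta := \tfrac{1+\alpha}{2}$, and record that $G_\mu(a,b) = \int \tfrac{pq}{f}\,\lambda$. Each differentiated term $D_{\mu,a}\Gamma(b,c)$, $D_{\mu,b}\Gamma(a,c)$ produces a fully symmetric cubic contribution proportional to $pqr/f^2$ together with a term proportional to $\big(\int \tfrac{pqr}{f^2}\lambda\big)f$; both are invariant under $a\leftrightarrow b$ and hence cancel in the antisymmetric difference, leaving exactly $\beta\big(G(b,c)a - G(a,c)b\big)$. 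For the two quadratic terms, the decisive simplification occurs when expanding $\Gamma\big(a,\Gamma(b,c)\big)$: the inner-product correction carries a factor $\int a$, which vanishes precisely because $a\in T_\mu\Prob$ is mean-zero, and after this cancellation these terms contribute $\beta^2\big(G(a,c)b - G(b,c)a\big)$. Summing, the coefficient of $G(b,c)a - G(a,c)b$ is $\beta - \beta^2 = \beta(1-\beta) = \tfrac{(1+\alpha)(1-\alpha)}{4} = \tfrac{1-\alpha^2}{4}$, which is the claimed identity.

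The main obstacle is purely organizational: tracking the several cubic and correction terms and recognizing which survive, and in particular observing that it is the constraint $\int a = 0$ defining $T_\mu\Prob$ that forces the quadratic terms to collapse to a multiple of the Fisher--Rao tensor. (On $\Dens$ the same computation, now \emph{without} this constraint and without the projection term in $\Gamma$, yields fully symmetric terms that all cancel, giving a flat connection, consistent with Theorem~\ref{thm:explicit_dens}.) Two sanity checks confirm the result: for $\alpha=0$ it returns constant Fisher--Rao sectional curvature $\tfrac14$, matching the fact that $(\Prob,G)$ is isometric via the square-root map to a round sphere in $L^2(\lambda)$, and it vanishes for $\alpha = \pm 1$, consistent with the flatness of the mixture and exponential connections.
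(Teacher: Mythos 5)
Your proof is correct, and the computations check out: with constant fields the antisymmetrized derivative terms contribute $\beta\left(\GFR_\mu(b,c)a-\GFR_\mu(a,c)b\right)$, the quadratic terms contribute $-\beta^2\left(\GFR_\mu(b,c)a-\GFR_\mu(a,c)b\right)$, and $\beta(1-\beta)=\tfrac{1-\alpha^2}{4}$ gives the claim; the constant-field trick is legitimate precisely because $T_\mu\Prob$ is a fixed linear subspace, so such fields are genuine vector fields on $\Prob$ with vanishing brackets, and tensoriality of $R^{(\alpha)}$ does the rest. Your organization, however, differs from the paper's. The paper works with general vector fields and splits $\nabar_a b=\na_a b+\tau\GFR_{\mu}(a,b)\mu$ with $\tau=\tfrac{1+\alpha}{2}$, where $\na$ is the $\alpha$-connection on $\Dens$; it then invokes the previously established flatness of $\na$ on $\Dens$ (it is the Levi-Civita connection of the flat metric $G^\alpha$, cf.\ Theorem~\ref{thm:explicit_dens}), so that only the cross terms with the correction $\tau\GFR(\cdot,\cdot)\mu$ survive --- a Gauss-equation-style argument --- and the residual bracket terms $\tau\GFR_{\mu}(c,Db.a-Da.b)\mu-\tau\GFR_{\mu}([a,b],c)\mu$ cancel at the end. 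Your version is self-contained but pays for it: the cancellation of the fully symmetric cubic terms $pqr/f^2$ that you track by hand is exactly the flatness of the $\Dens$-part that the paper cites rather than recomputes. Both proofs pivot on the same key fact, $\GFR_\mu(a,\mu)=\int a=0$ for $a\in T_\mu\Prob$, which is what forces all curvature to come from the projection term. What your route buys is independence from Theorem~\ref{thm:explicit_dens} and a transparent use of the affine structure of $\Prob$; what the paper's route buys is a shorter computation and the conceptual reading of $\tau\GFR(a,b)\mu$ as a second-fundamental-form correction to a flat ambient connection.
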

\begin{remark}
This implies the well-known 
facts that the curvature of $\nabar$ is flat for $\alpha=\pm 1$, that $R^{(\alpha)}=R^{(-\alpha)}$ and that the sectional curvature is $1/4$ for the Fisher--Rao metric, cf.~\cite[Appendix B]{ay2017information}.
\end{remark}
\begin{proof}
Let $\tau = \frac{1+\alpha}{2}$. 
The connection $\nabar$ on $\Prob$ can be obtained from $\na$ on $\Dens$ as
$$\nabar_ab=\na_ab+\tau\GFR_{\mu}(a,b)\mu, \quad\text{where}\quad \na_ab=Db.a-\tau\frac{a}{\mu}b,$$
so that
\begin{align*}
\nabar_a\nabar_bc&=\nabar_a\left(\na_bc+\tau\GFR_{\mu}(b,c)\mu\right)\\
&=\na_a\left(\na_bc+\tau\GFR_{\mu}(b,c)\mu\right)+\tau\GFR_\mu\left(a, \na_bc+\tau\GFR_{\mu}(b,c)\mu\right)\mu\\
&=\na_a\na_bc+\tau D\left(\GFR_{\mu}(b,c)\mu\right).a-\tau^2\frac{a}{\mu}\GFR_{\mu}(b,c)\mu\\
&\quad+\left(\tau^2\GFR_{\mu}(a,\mu)\GFR_{\mu}(b,c)+\tau\GFR_{\mu}(a,Dc.b)-\tau^2\GFR_{\mu}(a,\frac{b}{\mu}c)\right)\mu.
\end{align*}
Using the fact that $\GFR_{\mu}(a,\mu)=0$ for any $a$ tangent to $\Prob$, and
$$D\left(\GFR_{\mu}(b,c)\mu\right).a=\GFR_{\mu}(Db.a,c)\mu+\GFR_{\mu}(b,Dc.a)\mu-\GFR_{\mu}\left(a,\frac{b}{\mu}c\right)\mu + \GFR_{\mu}(b,c)a,$$
we obtain
\begin{align*}      
   \nabar_a\nabar_bc&=\na_a\na_bc+\tau\left(\GFR_{\mu}(a,Dc.b)+\GFR_{\mu}(b,Dc.a)+\GFR_{\mu}(c,Db.a)\right)\\
    &\quad+\tau\left(1-\tau\right)\GFR_{\mu}(b,c)a-\tau\left(1+\tau\right)\GFR_{\mu}(a,\frac{b}{\mu}c)\mu.
\end{align*}
Using the fact that $\na$ has zero curvature on $\Dens$, the curvature tensor of $\nabar$ is given by
\begin{align*}
R^{(\alpha)}(a,b)c&=\nabar_a\nabar_bc-\nabar_b\nabar_ac-\nabar_{[a,b]}c\\
&=\tau\GFR_{\mu}(c,Db.a-Da.b)\mu+\tau\left(1-\tau\right)(\GFR_{\mu}(b,c)a-\GFR_{\mu}(a,c)b)-\tau\GFR_{\mu}([a,b],c)\mu,
\end{align*}
which yields the result as the first and third terms on the righthand side cancel.
\end{proof}

\begin{proof}[Proof of Theorem~\ref{thm:alpha_nonmetric}]
For $\alpha\in \{-1,0\}$ we have shown in Lemma~\ref{lem:alphacon} that  $\nabar$ is the Levi-Civita connection of the $\alpha$-Fisher--Rao metric. Next we will show the statement for $\alpha=1$. To calculate the Christoffel symbols we proceed as in the proof of Theorem~\ref{thm:alphaFR_connection_dens}. We have:
\begin{align*}
\left(D_{\mu,a} \tilde G^{1}\right)(b,c)&=  
D_{\mu,a}\left(\int_M\frac{b}{\mu}\frac{c}{\mu}\dx -\int_M\frac{b}{\mu}\dx \int \frac{c}{\mu}\dx\right)
\\&=-2\int \frac{a}{\mu}\frac{b}{\mu}\frac{c}{\mu}\dx+\int_M\frac{a}{\mu}\frac{b}{\mu}\dx \int \frac{c}{\mu}\dx+\int_M\frac{b}{\mu}\dx \int \frac{a}{\mu}\frac{c}{\mu}\dx
\end{align*}
Thus we get 
\begin{align*}
&\frac12 \left(D_{\mu,a} \tilde G^{1}\right)(b,c)+\frac12\left(D_{\mu,b} G^{\alpha}\right)(a,c)-\frac12\left(D_{\mu,c} G^{\alpha}\right)(a,b)\\&\qquad=
-\int \frac{a}{\mu}\frac{b}{\mu}\frac{c}{\mu}\dx+\int_M\frac{a}{\mu}\frac{b}{\mu}\dx \int \frac{c}{\mu}\dx  \\
&\qquad=\tilde G^{1}\left(\frac{a}{\mu}\frac{b}{\mu}\mu-\left(\int_M \frac{a}{\mu}\frac{b}{\mu}\mu\right)\mu,c\right).
\end{align*}
Thus we read off that 
\begin{equation*}
\Gamma_{\mu}(a,b)=\frac{a}{\mu}\frac{b}{\mu}\mu-\left(\int_M \frac{a}{\mu}\frac{b}{\mu}\mu\right)\mu.
\end{equation*}
To see that these really are the Christoffel symbols it remains to note that $\Gamma_{\mu}(a,b)\in T_{\mu}\Prob$, which follows since they integrate to zero. 

It remains to prove that $\nabar$ is non-metric for $\alpha\notin \{-1,0,1\}$. This part of the proof is inspired by a post of Robert Bryant on MathOverflow\footnote{\url{https://mathoverflow.net/questions/54434/when-can-a-connection-induce-a-riemannian-metric-for-which-it-is-the-levi-civita}}.
Let us assume that there exists a Riemannian metric $\bar G^{\alpha}$ on $\Prob$ such that $\nabar$ is compatible with $\bar G^\alpha$, i.e., 
\begin{equation}\label{compatibility}
a\bar G^\alpha(b,c)-\bar G^\alpha(\nabar_ab,c)-\bar G^\alpha(b,\nabar_ac)=0,
\end{equation}
for all $a,b,c$ vector fields on $\Prob$. Then we would also have:
$$\bar G^\alpha(R^{(\alpha)}(a,b)c,c)=0,$$
where $R^{(\alpha)}$ is the Riemann curvature tensor of $\nabar$. From Lemma~\ref{prop:alpha-curvature}, this condition can be rewritten
$$\GFR_{\mu}(b,c)\bar G^\alpha_\mu(a,c)=\GFR_{\mu}(a,c)\bar G^\alpha_\mu(b,c),$$
implying that
\begin{equation}\label{conformal}
\bar G_\mu^\alpha(a,b)=h(\mu)G_\mu(a,b),
\end{equation}
for some conformal factor $h(\mu)=e^{\theta(\mu)}$. The terms in the compatibility condition \eqref{compatibility} can then be computed using \eqref{conformal} and the definition of $\nabar$:
\begin{align*}
a\bar G^\alpha(b,c)&=\GFR_{\mu}(b,c)Dh.a+h(\mu)\left(\GFR_{\mu}(Db.a,c)+\GFR_{\mu}(Dc.a,b)-\int\frac{a}{\mu}\frac{b}{\mu}\frac{c}{\mu}\mu\right),\\
\bar G^\alpha(\nabar_ab,c)&=h(\mu)\left(\GFR_{\mu}(Db.a,c)-\tau\int\frac{a}{\mu}\frac{b}{\mu}\frac{c}{\mu}\mu\right),\\
\bar G^\alpha(\nabar_ac,b)&=h(\mu)\left(\GFR_{\mu}(Dc.a,b)-\tau\int\frac{a}{\mu}\frac{b}{\mu}\frac{c}{\mu}\mu\right),
\end{align*}
where $\tau = \frac{1+\alpha}{2}$.
Using the fact that $Dh.a=h(\mu)D\theta.a$, we obtain that equation \eqref{compatibility} is written
$$\GFR_{\mu}(b,c)D\theta.a+\left(2\tau -1\right)\int\frac{a}{\mu}\frac{b}{\mu}\frac{c}{\mu}\mu=0,\quad\forall a,b,c\in T\Prob.$$
Now we show that for every $a\in T\Prob$, there exist $b,c\in T\Prob$ such that the second term of the LHS in the above equation vanishes while $\GFR_{\mu}(b,c)$ is not zero. This will imply that $\theta$ is constant, so that $\bar G^\alpha$ is in fact a constant multiple of the Fisher--Rao metric $\GFR$, yielding a contradiction for all values of $\alpha$ except for $\alpha=0$ (i.e., $\tau=1/2$), the only case where $\GFR$ is compatible with the $\alpha$-connection.

To show this last step, fix $a\in T\Prob$ and consider $b\in T\Prob$ that is also in the orthogonal of $a$ with respect to $\GFR$. Then $\frac{a}{\mu}b$ belongs to $T\Prob$. Denote
$$c=b-\GFR_\mu\left(b,\frac{a}{\mu}b\right)\GFR_\mu\left(\frac{a}{\mu}b,\frac{a}{\mu}b\right)^{-1}\frac{a}{\mu}b.$$
Then $c$ also belongs to $T\Prob$ and is, by construction, $\GFR$-orthogonal to $\frac{a}{\mu}b$, so that 
$$\int\frac{a}{\mu}\frac{b}{\mu}\frac{c}{\mu}\mu=0.$$
On the other hand
$$\GFR_{\mu}(b,c)=\GFR_{\mu}(b,b)-\GFR_{\mu}\left(b,\frac{a}{\mu}b\right)^2\GFR_{\mu}\left(\frac{a}{\mu}b,\frac{a}{\mu}b\right)^{-1}$$
is strictly positive since $\frac{a}{\mu}$ is not constant, because $a$ must integrate to zero.
\end{proof}

\subsection{The geometry of the $\alpha$-connections on $\Prob$}\label{sec:geometry_alpha}

In this section we aim to use the map $\Phi_{\alpha}$ as defined in \eqref{eq:Phi} for understanding the geometry of the $\alpha$-connection $\overline{\nabla}^{(\alpha)}$ on $\Prob$; 
for $\alpha\in (-1,1)$ similar calculations were done in~\cite{bauer2024p}; here we note how they extend to any $\alpha\in \R$.


The main idea in this section is to pull back an appropriate natural connection on a submanifold of $C^\infty(M)$. For this connection, the geodesic equation has a simpler structure than for $\overline{\nabla}^{(\alpha)}$, and we can obtain 
a closed formula for its solution, up to a solution of an ODE. In the following we will exclude the case $\alpha=1$, which is slightly different and was treated in Remark~\ref{rem:alpha1}.

\begin{lemma}
    Let $\alpha\ne 1$ and let $\Phi_{\alpha}$ as in \eqref{eq:Phi}.
    Then
    \[
    S_\alpha:=\Phi_{\alpha}(\Prob) = \left\{ f\in C^\infty(M) ~:~ f>0\text{ and } \, \int_M f^{\frac{2}{1-\alpha}}\, \lambda = \left(\frac{2}{|1-\alpha|}\right)^{\frac{2}{1-\alpha}}\right\}
    \]
    is a smooth submanifold of $C^\infty(M)$, 
    whose tangent space is
    \[
    T_f S_\alpha = \left\{ \xi \in C^\infty(M) ~:~ \int_M f^{\frac{1+\alpha}{1-\alpha}}\xi \,\lambda = 0 \right\}.
    \]
    Furthermore, the inverse $\Phi^{-1}_{\alpha}:S_\alpha\to\Prob $ is given by
    \begin{equation*}
    \Phi^{-1}_{\alpha}(f)=\left(\frac{|1-\alpha|}{2}f\right)^{\frac{2}{1-\alpha}}.
    \end{equation*}
\end{lemma}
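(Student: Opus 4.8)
The plan is to work throughout with the Radon--Nikodym derivative $h=\tfrac{\mu}{\lambda}$, which identifies $\Prob$ with the positive smooth functions of unit $\lambda$-integral, and to prove the four assertions in the order: set equality, inverse formula, submanifold structure, tangent space. First I would verify the inclusion $\Phi_\alpha(\Prob)\subseteq S_\alpha$ by direct substitution. For $\mu\in\Prob$ the function $f=\Phi_\alpha(\mu)=\tfrac{2}{|1-\alpha|}h^{(1-\alpha)/2}$ is smooth and strictly positive, and raising it to the power $\tfrac{2}{1-\alpha}$ (which is legitimate since $f>0$, irrespective of the sign of $1-\alpha$) collapses the exponents via $\tfrac{1-\alpha}{2}\cdot\tfrac{2}{1-\alpha}=1$ to give $f^{2/(1-\alpha)}=(\tfrac{2}{|1-\alpha|})^{2/(1-\alpha)}\,h$; integrating against $\lambda$ and using $\int_M h\,\lambda=1$ produces exactly the defining constraint of $S_\alpha$.

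For the reverse inclusion I would exhibit the candidate inverse $g(f):=(\tfrac{|1-\alpha|}{2}f)^{2/(1-\alpha)}$ and treat it simultaneously with the inverse-formula claim. Given $f\in S_\alpha$, the function $g(f)$ is smooth and positive, and the integral constraint on $f$ forces $\int_M g(f)\,\lambda=1$, so $g(f)\lambda\in\Prob$; a one-line exponent computation then shows $\Phi_\alpha(g(f)\lambda)=f$ and, conversely, $g(\Phi_\alpha(\mu))=h$. This establishes $S_\alpha=\Phi_\alpha(\Prob)$ and identifies $\Phi_\alpha^{-1}$ with the stated expression in a single stroke.

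Next I would present $S_\alpha$ as a regular level set. On the open cone $U=\{f\in C^\infty(M):f>0\}=\Phi_\alpha(\Dens)$ define $F(f)=\int_M f^{2/(1-\alpha)}\,\lambda$; differentiating and using $\tfrac{2}{1-\alpha}-1=\tfrac{1+\alpha}{1-\alpha}$ gives $DF(f)[\xi]=\tfrac{2}{1-\alpha}\int_M f^{(1+\alpha)/(1-\alpha)}\xi\,\lambda$. Since the weight $f^{(1+\alpha)/(1-\alpha)}$ is strictly positive, $DF(f)$ is a nonzero continuous linear functional — for instance $DF(f)[1]\neq0$ — hence surjective onto $\R$. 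Thus $c=(\tfrac{2}{|1-\alpha|})^{2/(1-\alpha)}$ is a regular value, $S_\alpha=F^{-1}(c)$ is a smooth codimension-one submanifold, and $T_fS_\alpha=\ker DF(f)$, which is precisely the claimed space.

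The arithmetic here is entirely routine, so the only genuine point of care is the infinite-dimensional setting, where one cannot appeal to a naive regular-value or inverse-function theorem. The hard part is therefore purely the Fréchet-manifold bookkeeping: I would observe that $\ker DF(f)$ is a closed hyperplane admitting the one-dimensional topological complement spanned by any $\xi_0$ with $DF(f)[\xi_0]\neq0$ (e.g. $\xi_0\equiv1$), so $S_\alpha$ is a splitting submanifold in the convenient-calculus sense, which is exactly what validates the submersion argument on $C^\infty(M)$ — the same mechanism by which $\Dens$ and $\Prob$ are themselves manifolds. As an optional consistency check, one verifies that $T_\mu\Phi_\alpha$ carries $T_\mu\Prob=\{a:\int a=0\}$ onto $\ker DF(f)$, reconciling the two descriptions of the tangent space.
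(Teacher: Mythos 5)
Your set-theoretic claims are correct and match the paper: the inclusion $\Phi_\alpha(\Prob)\subseteq S_\alpha$ by the exponent computation, and the reverse inclusion together with the inverse formula via $g(f)=\bigl(\tfrac{|1-\alpha|}{2}f\bigr)^{2/(1-\alpha)}$, are exactly the ``positivity plus total mass constraint'' argument the paper gives. The gap is in the submanifold step. You present $S_\alpha$ as a regular level set of $F(f)=\int_M f^{2/(1-\alpha)}\lambda$ and assert that the splitting $C^\infty(M)=\ker DF(f)\oplus\operatorname{span}\{\xi_0\}$ ``validates the submersion argument.'' It does not: in the Fr\'echet category there is no regular-value or implicit-function theorem, and a closed hyperplane kernel with a one-dimensional complement is not by itself enough to manufacture submanifold charts for a genuinely curved level set. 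Your supporting analogy is also off: $\Dens$ and $\Prob$ are \emph{not} manifolds ``by the same mechanism'' --- $\Dens$ is an open subset of $\Omega^n(M)$ and $\Prob$ is the intersection of that open set with a closed affine hyperplane, so no submersion theorem is invoked anywhere in their construction. As written, the central assertion of the lemma (that $S_\alpha$ is a smooth submanifold) rests on a theorem that is unavailable in this setting.

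The fix is already in your last sentence, but with the roles inverted: what you call an ``optional consistency check'' is the actual proof. Identify $\Dens$ with the open positive cone $\{h\in C^\infty(M): h>0\}$ via $\mu\mapsto \mu/\lambda$; then $\Phi_\alpha$ is a bijection of this cone onto itself, smooth with smooth inverse (the inverse being your $g$, a power map), hence a diffeomorphism of open subsets of $C^\infty(M)$. Since $\Prob$ corresponds to the intersection of the cone with the closed affine hyperplane $\{h:\int_M h\,\lambda=1\}$, it is a (splitting) submanifold of $\Dens$ for purely affine reasons. Therefore $S_\alpha=\Phi_\alpha(\Prob)$ is a submanifold of $C^\infty(M)$ as the image of a submanifold under a diffeomorphism, and
\begin{equation*}
T_fS_\alpha=T_\mu\Phi_\alpha\bigl(T_\mu\Prob\bigr)
=\left\{\xi\in C^\infty(M):\int_M f^{\frac{1+\alpha}{1-\alpha}}\xi\,\lambda=0\right\},
\end{equation*}
where the last equality is the computation you sketched ($a\mapsto \operatorname{sgn}(1-\alpha)\,h^{-(1+\alpha)/2}\tfrac{a}{\lambda}$ turns $\int a=0$ into the stated weighted-orthogonality condition). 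This is what the paper's terse ``the remaining statements directly follow'' refers to, and it bypasses the Fr\'echet-space obstruction entirely.
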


\begin{proof}
This follows immediately from the formula for $\Phi_{\alpha}$: the first condition follows from the fact that elements of $\Prob$ are positive densities, whereas the second condition follows from the total mass constraint.  The remaining statements directly follow.
\end{proof}

\begin{definition}[Radial projection and the $\alpha$-connection on $S_\alpha$]
    Let $\alpha\neq 1$. The radial ($\alpha$-)projection $\pi^\alpha : TC^\infty(M)|_{S_\alpha} \to TS_\alpha$ is the projection with respect to the splitting
    $ T_fC^\infty(M) = T_fS_\alpha \oplus \operatorname{span}\{f\}$, that is,
    \[
    \pi_f^\alpha(\xi) = \xi - \left(\frac{|1-\alpha|}{2}\right)^{\frac{2}{1-\alpha}}\left(\int_M f^{\frac{1+\alpha}{1-\alpha}} \xi \,\lambda \right) f.
    \]
    This notion allows us define a new $\alpha$-connection on $S_\alpha$: 
    let $\nabla$ be the trivial connection on $C^\infty(M)$.
    We then define the $\alpha$-connection $\tilde{\nabla}^\alpha$ on $S_\alpha$ by
    \[
    (\tilde{\nabla}^\alpha_\xi \eta)_f = \pi^\alpha_f (\nabla_\xi \eta).
    \]
\end{definition}
The following proposition shows that the pullback of the $\alpha$-connection $\tilde{\nabla}^\alpha$ yields a multiple of the Amari-\u{C}encov $\alpha$-connection on $\Prob$:

\begin{proposition}
    Let $\alpha\neq 1$. Up to a constant depending on the footpoint, the pullback of $\tilde{\nabla}^\alpha$ to $\Prob$ via the map $\Phi_{\alpha}$ is the Amari-\u{C}encov $\alpha$-connection $\overline{\nabla}^{(\alpha)}$; more precisely we have
    \[
    \Phi^* \tilde{\nabla}^\alpha |_\mu = \operatorname{sgn}(1-\alpha)\left(\frac{\mu}{\lambda}\right)^{-\frac{1+\alpha}{2}} \overline{\nabla}^{(\alpha)}|_\mu.
    \]
    In particular, the geodesic equations of $\Phi^* \tilde{\nabla}^\alpha$ and $\overline{\nabla}^{(\alpha)}$ are the same.
\end{proposition}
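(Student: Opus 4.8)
The plan is to unwind the three ingredients in the definition $\tilde{\nabla}^\alpha=\pi^\alpha\circ D$ on $S_\alpha$: the pushforward $T\Phi_\alpha$, the flat ambient derivative $D$ on $C^\infty(M)$, and the radial projection $\pi^\alpha$. Since $\Phi_\alpha:\Prob\to S_\alpha$ is a diffeomorphism, computing the pullback amounts to pushing two vector fields $a,b$ on $\Prob$ forward to $\xi=T\Phi_\alpha(a)$ and $\eta=T\Phi_\alpha(b)$ on $S_\alpha$, forming $\pi^\alpha(D\eta.\xi)$, and transporting the result back. The guiding observation is that the two projections built into the construction on $S_\alpha$ should mirror, under $\Phi_\alpha$, the two projections defining $\nabar$ on $\Prob$: the $\Dens$-level connection $\na$, followed by the Fisher--Rao orthogonal projection onto $T_\mu\Prob$.

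First I would record, from the proof of Theorem~\ref{thm:explicit_dens}, that $T_\mu\Phi_\alpha(a)=\operatorname{sgn}(1-\alpha)\left(\tfrac{\mu}{\dx}\right)^{-\frac{1+\alpha}2}\tfrac{a}{\dx}$; this is pointwise multiplication by the footpoint factor $\operatorname{sgn}(1-\alpha)(\mu/\dx)^{-(1+\alpha)/2}$ composed with the identification $w\mapsto w/\dx$. Next I would differentiate $\eta$ along a curve $\mu(t)$ with $\dot\mu(0)=a$, the dot denoting the ambient derivative in $\Omega^n(M)$. By the product rule, the derivative of the prefactor $(\mu/\dx)^{-(1+\alpha)/2}$ produces precisely a term $-\tfrac{1+\alpha}2\tfrac{a}{\mu}b$, while the derivative of $b$ produces $Db.a$; collecting terms shows $D\eta.\xi=T_\mu\Phi_\alpha\big(Db.a-\tfrac{1+\alpha}2\tfrac{a}{\mu}b\big)=T_\mu\Phi_\alpha(\na_a b)$, with $\na$ the density-level connection of Definition~\ref{lem:alphacon}. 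Thus the flat derivative on $S_\alpha$, pulled back, already reproduces the $\Dens$-level $\alpha$-connection.

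The crux is the projection step, and it is where I expect the real work. I would show that $T\Phi_\alpha$ intertwines the radial splitting $T_fC^\infty(M)=T_fS_\alpha\oplus\operatorname{span}\{f\}$ with the Fisher--Rao splitting $T_\mu\Dens=T_\mu\Prob\oplus\operatorname{span}\{\mu\}$. Concretely, $(T_\mu\Phi_\alpha)^{-1}$ sends the normal line $\operatorname{span}\{f\}$ to $\operatorname{span}\{\mu\}$, which is the $\GFR$-normal to $T_\mu\Prob$, and the weight $f^{(1+\alpha)/(1-\alpha)}$ together with the constant $(2/|1-\alpha|)^{2/(1-\alpha)}$ appearing in $\pi^\alpha$ combine, after substituting $f=\tfrac2{|1-\alpha|}(\mu/\dx)^{(1-\alpha)/2}$, so that the projection coefficient collapses to $\int_M \na_a b$. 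Hence $\pi^\alpha(D\eta.\xi)=T_\mu\Phi_\alpha\big(\na_a b-(\int_M\na_a b)\,\mu\big)$. Since $\nabar_a b$ is exactly the $\GFR$-orthogonal projection $\na_a b-(\int_M \na_a b)\mu$ of $\na_a b$ onto $T_\mu\Prob$ (the projection property of $\nabar$, together with the fact that $\operatorname{span}\{\mu\}$ is the $\GFR$-complement of $T_\mu\Prob$), we obtain $\tilde{\nabla}^\alpha_\xi\eta=T_\mu\Phi_\alpha(\nabar_a b)$.

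Reading off the footpoint factor of $T_\mu\Phi_\alpha$ then yields the displayed identity $\Phi^*\tilde{\nabla}^\alpha|_\mu=\operatorname{sgn}(1-\alpha)(\mu/\dx)^{-(1+\alpha)/2}\,\nabar|_\mu$. Finally, because this factor is a nowhere-vanishing scalar depending only on the footpoint, it multiplies the covariant acceleration of any curve without changing its zero set; hence a curve is a $\Phi^*\tilde{\nabla}^\alpha$-geodesic if and only if it is a $\nabar$-geodesic, giving the asserted equality of geodesic equations. The main obstacle is the bookkeeping in the projection step: verifying that the exponents $\tfrac{1+\alpha}{1-\alpha}$, $\tfrac{1\pm\alpha}2$ and the normalization constant conspire to turn the radial projection into the Fisher--Rao projection. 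Everything else reduces to a direct product-rule computation.
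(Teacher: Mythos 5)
Your proposal is correct and takes essentially the same route as the paper: the paper's proof is precisely this direct calculation (deferred to the cited Theorems 3.12 and 4.10 of the earlier work, where $\alpha\in(-1,1)$ was treated), namely computing $T_\mu\Phi_\alpha$, checking via the product rule that the flat derivative of pushed-forward fields reproduces $\nabla^{(\alpha)}$ on $\Dens$, and verifying that the radial projection $\pi^\alpha$ intertwines under $T\Phi_\alpha$ with the Fisher--Rao projection defining $\overline{\nabla}^{(\alpha)}$. The exponent bookkeeping you flag as the crux does collapse as claimed (since $\tfrac{2}{1-\alpha}-\tfrac{1+\alpha}{1-\alpha}=1$ and $T_\mu\Phi_\alpha(\mu)=\tfrac{1-\alpha}{2}f$), so your outline is complete; the only slips are cosmetic (the constant in $\pi^\alpha$ is $(|1-\alpha|/2)^{2/(1-\alpha)}$, not its reciprocal, and the prefactor is a nowhere-vanishing function on $M$ rather than a scalar, which still suffices for the geodesic equivalence).
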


\begin{proof}
    The proof follows by direct calculation,  cf.~\cite[Theorem 3.12 and Theorem~4.10]{bauer2024p} where the case $\alpha\in(-1,1)$ was treated.
\end{proof}

By definition the geodesic equation of the connection $\tilde{\nabla}^\alpha$ is given by
$\pi^\alpha_\gamma (\ddot\gamma) = 0$, that is, $\ddot\gamma \in \operatorname{span}(\gamma)$, i.e., the geodesic equations can be written as
    \[
    \begin{cases}
        \ddot \gamma \,\parallel \, \gamma \\
        \int_M \gamma^{\frac{2}{1-\alpha}} \,\lambda = \left(\frac{2}{|1-\alpha|}\right)^{\frac{2}{1-\alpha}}.
    \end{cases}
    \]
Given the simplicity of the geodesic equation of $\tilde{\nabla}^\alpha$, one can guess that its solutions are projections of straight lines to $S_\alpha$.
For $\alpha\in (-1,1)$ this has been shown to be true, up to time reparametrization, cf.~\cite[Theorem 4.11]{bauer2024p}.
The following theorem states that this is true for any $\alpha\neq 1$, and extends this to treat both the initial and boundary value geodesic problems:
\begin{theorem}
    \label{thm:sulotion_prob}
    Let $\alpha\neq 1$. Let $f\in S_\alpha$ and let $\xi \in C^\infty(M)$, $\xi\ne f$.
    Let $I \subset \mathbb{R}$ be an interval containing $0$, and suppose $\tau : I \to \mathbb{R}$ satisfies the ODE
    \[
    \begin{split}
        \ddot\tau (t) &= 2 \frac{\int_M (f + \tau(t)\xi)^{\frac{1+\alpha}{1-\alpha}} \xi \,\lambda}{\int_M (f + \tau(t)\xi)^{\frac{2}{1-\alpha}} \,\lambda}\dot\tau(t)^2, \\
        \tau(0) &=0, \\
        \dot{\tau}(0) &=1.
    \end{split}
    \]
    Then $\gamma : I \to S_\alpha$, defined by
    \[
    \gamma(t) = \frac{2}{|1-\alpha|}\frac{f+ \tau(t)\xi}{\left(\int_M (f+\tau(t)\xi)^{\frac{2}{1-\alpha}}\,\lambda\right)^{\frac{1-\alpha}{2}}},
    \]
    is a geodesic of $\tilde{\nabla}^\alpha$ emanating from $\gamma(0)=f$.
    For $\xi\in T_f S_\alpha$, this is the unique geodesic with initial conditions $\gamma(0) = f$, $\dot{\gamma}(0) = \xi$.
    For $\bar{f}\in S_\alpha$ and $\xi = \bar{f} - f$, this is the unique geodesic from $f$ to $\bar{f}$.
\end{theorem}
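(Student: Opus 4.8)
The plan is to realize $\gamma$ as a time-reparametrized \emph{radial projection of the straight line} $s\mapsto f+s\xi$ in $C^\infty(M)$, and then to verify directly that such curves solve the geodesic equation $\pi^\alpha_\gamma(\ddot\gamma)=0$ precisely when the reparametrization obeys the stated ODE. Write $\Lambda(s):=\int_M (f+s\xi)^{\frac{2}{1-\alpha}}\,\lambda$ and let $\rho(s):=\frac{2}{|1-\alpha|}\Lambda(s)^{-\frac{1-\alpha}{2}}$ be the unique positive scalar for which $\rho(s)(f+s\xi)\in S_\alpha$; then the curve in the statement is exactly $\gamma(t)=\rho(\tau(t))\,(f+\tau(t)\xi)$. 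Since $f\in S_\alpha$ gives $\Lambda(0)=\bigl(\tfrac{2}{|1-\alpha|}\bigr)^{\frac{2}{1-\alpha}}$ and hence $\rho(0)=1$, we immediately get $\gamma(0)=f$, and $\gamma(t)\in S_\alpha$ for all $t$ with $f+\tau(t)\xi>0$ by the very choice of $\rho$.

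For the geodesic verification I would set $\mu:=\rho\circ\tau$ and $\ell:=f+\tau\xi$, so that $\gamma=\mu\,\ell$ with $\dot\ell=\dot\tau\,\xi$ and $\ddot\ell=\ddot\tau\,\xi$. Differentiating twice and substituting $\ell=f+\tau\xi$ expresses the acceleration in the pair $f,\xi$ (linearly independent whenever $\xi\notin\operatorname{span}\{f\}$; the excluded case being the trivial constant geodesic) as
\[
\ddot\gamma=\ddot\mu\,f+\bigl(\ddot\mu\,\tau+2\dot\mu\dot\tau+\mu\ddot\tau\bigr)\,\xi.
\]
Because $\gamma=\mu(f+\tau\xi)$, the condition $\ddot\gamma\parallel\gamma$ is equivalent to the single scalar equation $2\dot\mu\dot\tau+\mu\ddot\tau=0$. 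Finally, differentiating $\Lambda$ under the integral sign yields $\frac{\rho'}{\rho}=-\frac{1-\alpha}{2}\frac{\Lambda'}{\Lambda}=-\frac{\int_M(f+s\xi)^{\frac{1+\alpha}{1-\alpha}}\xi\,\lambda}{\int_M(f+s\xi)^{\frac{2}{1-\alpha}}\,\lambda}$, and inserting $\dot\mu=\rho'(\tau)\dot\tau$ turns $2\dot\mu\dot\tau+\mu\ddot\tau=0$ into exactly the ODE for $\tau$ in the statement. This proves that $\gamma$ is a $\tilde\nabla^\alpha$-geodesic.

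For the initial value problem I would compute $\dot\gamma(0)=\rho'(0)f+\xi$; when $\xi\in T_fS_\alpha$ the factor $\int_M f^{\frac{1+\alpha}{1-\alpha}}\xi\,\lambda$ vanishes, so $\rho'(0)=0$ and $\dot\gamma(0)=\xi$. Uniqueness then follows from the standard local existence–uniqueness theory for the second-order geodesic ODE $\pi^\alpha_\gamma(\ddot\gamma)=0$; concretely, integrating the $\tau$-ODE gives the first integral $\dot\tau=\rho(\tau)^{-2}$ (using $\dot\tau(0)=1=\rho(0)^{-2}$), which determines the reparametrization, hence $\gamma$, uniquely. Since every $\xi\in T_fS_\alpha$ is realized as an initial velocity of a curve in this family, these curves exhaust the geodesics emanating from $f$.

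For the boundary value problem, take $\xi=\bar f-f$, so $\ell(s)=(1-s)f+s\bar f$ is a convex combination of positive functions, hence positive on $[0,1]$, with radial projections $\beta(0)=\rho(0)f=f$ and $\beta(1)=\rho(1)\bar f=\bar f$ (using $\bar f\in S_\alpha$, so $\rho(1)=1$). The first integral $\dot\tau=\rho(\tau)^{-2}>0$ shows $\tau$ is strictly increasing and reaches the value $1$ at the finite time $t_1=\int_0^1\rho(\sigma)^2\,d\sigma$, whence $\gamma(t_1)=\bar f$. The step I expect to be most delicate is the \textbf{uniqueness} here: I would argue that any $\tilde\nabla^\alpha$-geodesic issuing from $f$ is the radial projection of the line through $f$ in its initial direction $\eta\in T_fS_\alpha$, so its image lies in $\operatorname{span}\{f,\eta\}$; passing through $\bar f$ then forces $\eta\in\operatorname{span}\{f,\bar f\}\cap T_fS_\alpha$. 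Since $\int_M f^{\frac{2}{1-\alpha}}\lambda\neq 0$ shows $f\notin T_fS_\alpha$, and $f,\bar f$ are linearly independent for $\bar f\neq f$, this intersection is one-dimensional; hence the initial direction, and therefore the geodesic, is unique up to affine reparametrization. The geodesic verification and the initial-value part are, by contrast, direct computations once the reparametrization is set up.
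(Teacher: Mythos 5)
Your proposal is correct, and its skeleton is the same as the paper's (realize $\gamma$ as a reparametrized radial projection of the line $s\mapsto f+s\xi$, reduce the geodesic condition to the scalar ODE for $\tau$), but several steps are executed by genuinely different and in places more self-contained arguments. The paper does not re-verify the geodesic property at all: it cites the computation from \cite[Theorem 4.11]{bauer2024p}, whereas you reconstruct it directly via the decomposition $\gamma=\mu\ell$ and the equivalence $\ddot\gamma\parallel\gamma\iff 2\dot\mu\dot\tau+\mu\ddot\tau=0$; your computation of $\rho'/\rho$ and the resulting ODE check out. The more substantial difference is your first integral $\dot\tau=\rho(\tau)^{-2}$, which the paper never writes down. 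The paper instead proves monotonicity of $\tau$ by contradiction (a local maximum of $\tau$ would force $\dot\gamma(t_0)=0$, contradicting local well-posedness) and proves that $\tau$ reaches $1$ by a second contradiction argument (otherwise $\gamma(t)\to\gamma_\infty\in S_\alpha$ with $\dot\gamma\to 0$, and the geodesic emanating from $\gamma_\infty$ towards $f$ would again violate local well-posedness). Your separation-of-variables formula $t_1=\int_0^1\rho(\sigma)^2\,d\sigma$ gives monotonicity and finite-time attainment of $\tau=1$ in one stroke, explicitly, and without invoking well-posedness of the full geodesic equation; it also produces the arrival time, which the paper's argument does not. Likewise, your span-intersection argument ($\eta\in\operatorname{span}\{f,\bar f\}\cap T_fS_\alpha$, a one-dimensional space since $f\notin T_fS_\alpha$) makes explicit the uniqueness step that the paper states in one sentence.

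The one place where you underestimate the difficulty is the phrase \emph{standard local existence--uniqueness theory} for the geodesic equation. The equation lives on a submanifold of the Fr\'echet space $C^\infty(M)$, where Picard--Lindel\"of fails and local well-posedness is \emph{not} standard; both your IVP uniqueness and your exhaustion claim (``these curves exhaust the geodesics emanating from $f$''), on which the boundary-value uniqueness also rests, genuinely require it. The paper fills exactly this hole by citing the local well-posedness of $\overline{\nabla}^{(\alpha)}$ on $\Prob$ from \cite[Theorem 4.2]{bauer2024p}; your first-integral argument only pins down the unique curve \emph{within} the family of projected lines and cannot substitute for it. So your proof has the same essential logical dependency as the paper's and is complete once that reference (or an equivalent well-posedness proof) is supplied in place of the word ``standard''.
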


\begin{corollary}[Geodesics of $\alpha$-connections on $\Prob$]
The geodesics of $\overline{\nabla}^{(\alpha)}$ on $\Prob$ are obtained by pulling back the geodesics of $\tilde{\nabla}^{\alpha}$ via the map $\Phi_\alpha$.
In particular, this gives the general solution of the geodesic equation, as well as showing that $(\Prob,\overline{\nabla}^{(\alpha)})$ is uniquely-geodesically-convex --- any two probability densities can be connected by a unique $\alpha$-geodesic.\footnote{We note that for finite dimensional parametric models, a similar equation for the geodesic connecting two probability measures is given in \cite[Theorems 14.1, 15.1]{morozova1991markov} (see also \cite[P.~51]{ay2017information}).
Yet, to best of our knowledge, this formula appears without a proof that $\tau$ indeed reaches 1, nor with a geometric explanation or an explicit ODE for $\tau$.}
\end{corollary}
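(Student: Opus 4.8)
The plan is to derive everything as an immediate consequence of the preceding proposition, which identifies $\Phi^*\tilde\nabla^\alpha$ with a positive pointwise rescaling of $\overline{\nabla}^{(\alpha)}$, and of Theorem~\ref{thm:sulotion_prob}, which supplies the geodesics of $\tilde\nabla^\alpha$ on $S_\alpha$. The two issues to address are (i) that a curve is a geodesic of $\overline{\nabla}^{(\alpha)}$ if and only if its image under $\Phi_\alpha$ is a geodesic of $\tilde\nabla^\alpha$ (up to reparametrization), and (ii) that the resulting curves realize unique geodesic convexity of $(\Prob,\overline{\nabla}^{(\alpha)})$.

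First I would treat the case $\alpha\neq 1$. Since $\Phi_\alpha:\Prob\to S_\alpha$ is a diffeomorphism (by the lemma preceding the proposition, with explicit inverse), and since $\Phi_\alpha^*\tilde\nabla^\alpha=\operatorname{sgn}(1-\alpha)(\mu/\lambda)^{-(1+\alpha)/2}\,\overline{\nabla}^{(\alpha)}$ differs from $\overline{\nabla}^{(\alpha)}$ only by a nowhere-vanishing scalar function of the footpoint, the two connections have \emph{identical} geodesics as unparametrized curves; indeed, a pointwise conformal rescaling of a connection changes only the parametrization of geodesics, not the geodesics themselves (this is exactly the content of ``the geodesic equations are the same'' in the proposition, together with the fact that $\Phi_\alpha$ is a diffeomorphism so it carries geodesics to geodesics and back). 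Hence the geodesics of $\overline{\nabla}^{(\alpha)}$ on $\Prob$ are precisely the $\Phi_\alpha$-preimages of the geodesics of $\tilde\nabla^\alpha$ on $S_\alpha$ described in Theorem~\ref{thm:sulotion_prob}, giving the general solution of the $\overline{\nabla}^{(\alpha)}$-geodesic equation. For unique geodesic convexity, I would fix $\mu,\bar\mu\in\Prob$, set $f=\Phi_\alpha(\mu)$, $\bar f=\Phi_\alpha(\bar\mu)$, and invoke the boundary-value clause of Theorem~\ref{thm:sulotion_prob} with $\xi=\bar f-f$: it produces a unique geodesic of $\tilde\nabla^\alpha$ from $f$ to $\bar f$, whose $\Phi_\alpha$-preimage is then the unique $\overline{\nabla}^{(\alpha)}$-geodesic from $\mu$ to $\bar\mu$. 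Uniqueness transfers because $\Phi_\alpha$ is a bijection on curves.

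Second, for the excluded case $\alpha=1$, I would argue separately using Remark~\ref{rem:alpha1}: there the map $\tilde\Phi_1:\Prob\to C^\infty(M)$ carries $\overline{\nabla}^{(1)}$ (equivalently, the metric $\tilde G^1$) to the flat $L^2$-connection on the linear space of zero-average functions, whose geodesics are straight lines that exist for all time and uniquely connect any two points. Pulling these back by $\tilde\Phi_1$ again yields existence and uniqueness, so $(\Prob,\overline{\nabla}^{(1)})$ is uniquely geodesically convex as well.

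\textbf{Main obstacle.} The only delicate point is the claim that a \emph{footpoint-dependent} conformal rescaling of a connection preserves its geodesics. I expect this to be the step needing the most care: one must verify that if $\nabla'_XY=\rho\,\nabla_XY$ for a positive scalar function $\rho$ then $\nabla'$ and $\nabla$ have the same geodesics up to reparametrization — or, more safely, simply observe that the proposition already asserts that the \emph{geodesic equations} of $\Phi_\alpha^*\tilde\nabla^\alpha$ and $\overline{\nabla}^{(\alpha)}$ coincide, so I would lean on that assertion rather than re-deriving the conformal-rescaling fact. With that in hand, every other step is a formal transfer of statements through the diffeomorphism $\Phi_\alpha$ (respectively $\tilde\Phi_1$), and the existence/uniqueness content is entirely imported from Theorem~\ref{thm:sulotion_prob} and Remark~\ref{rem:alpha1}.
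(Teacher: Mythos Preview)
Your proposal is correct and the overall strategy matches the paper's: transfer everything through the diffeomorphism $\Phi_\alpha$ using the proposition that the geodesic equations agree, and read off unique geodesic convexity from the boundary-value clause of Theorem~\ref{thm:sulotion_prob}. You are also right to lean directly on the proposition's statement that the geodesic equations coincide rather than trying to justify a general ``conformal rescaling preserves geodesics'' principle (which, as stated for $\nabla'_XY=\rho\,\nabla_XY$, is in fact false in general, since such a $\nabla'$ need not even be a connection).

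The one point worth flagging is organizational: in the paper, the proof block placed after the corollary actually carries the substantive content of Theorem~\ref{thm:sulotion_prob} itself, in particular the nontrivial argument that the reparametrization $\tau$ reaches~$1$ for the boundary-value problem. That argument uses monotonicity of $\tau$ (from local well-posedness of $\overline{\nabla}^{(\alpha)}$) together with a contradiction: if $\tau$ accumulated at some $\tau_0\le 1$, the geodesic would limit to a point of $S_\alpha$ with zero velocity, contradicting well-posedness when one restarts the geodesic from that limit point. You take this as already established by the theorem, which is a perfectly reasonable reading given the statement ordering; just be aware that this is where the real work lies, and that the corollary's proof in the paper is not purely formal.
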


\begin{proof}
    The fact that $\gamma(t)$ is a geodesic is identical to the proof of \cite[Theorem 4.11]{bauer2024p}.
    A straightforward calculation shows that 
    \[
    \dot\gamma(0) = \xi -\left(\frac{2}{|1-\alpha|}\right)^{-\frac{2}{1-\alpha}}\left(\int_M f^{\frac{1+\alpha}{1-\alpha}}\xi\,\lambda\right) f = \pi^\alpha_f(\xi),
    \]
    and in particular $\dot\gamma(0) = \xi$ if $\xi \in T_f S_\alpha$, as the second addend vanishes.
    The fact that, in this case, $\gamma(t)$ is the unique geodesic with these initial conditions follows from the local well-posedness of $\overline{\nabla}^{(\alpha)}$ on $\Prob$ \cite[Theorem~4.2]{bauer2024p}.
    This establishes that all the geodesics are obtained by projecting straight lines onto $S_\alpha$.
    Thus, a geodesic between $f,\bar{f}\in S_\alpha$, if it exists, is unique, and given by $\gamma(t)$ as above with $\xi = \bar{f}-f$.
    In order to prove its existence, we need to show that for some $t=T$ we have $\tau(T)=1$.
    Note that $\tau$ must be monotonically increasing: otherwise, if $t_0$ is
    a local maximum of $\tau$, we immediately have that $\dot{\gamma}(t_0)=0$, in contradiction to the local well-posedness of the equation.
    Assume by contradiction that the geodesic equation with $\xi= \bar{f}-f$ never reaches $\tau=1$.
    Then we have $\lim_{t\to \infty}\tau(t) = t_0 \in (0,1]$.
    Since $f + \tau\xi = (1-\tau)f + \tau \bar{f}$ is a positive function for all $\tau\in [0,1]$, we obtain that $\gamma(t)$ is well defined for all $t\in [0,\infty)$, and, as $t\to \infty$, we have
    \[
     \gamma(t) \to \gamma_\infty:=\frac{2}{|1-\alpha|}\frac{f+ \tau_0\xi}{\left(\int_M (f+\tau_0\xi)^{\frac{2}{1-\alpha}}\,\lambda\right)^{\frac{1-\alpha}{2}}}\in S_\alpha
\]
and 
\[
\dot\gamma(t) \to 0.
\]
Consider now the geodesic emanating from $\gamma_\infty$ with $\xi\in f- \gamma_\infty$.
This geodesic has non-zero initial velocity, exists for some time, and it coincides with the original geodesic, as both of them lie in the intersection of $S_\alpha$ with the two-dimensional plane spanned by $f$ and $\bar{f}$.
This is a contradiction to the local well-posedness of the equation, proving that $\lim_{t\to T_{\text{max}}}\tau(t) >1$, hence the geodesic from $f$ to $\bar{f}$ exists.
\end{proof}

\begin{remark}[Longtime existence of $\alpha$-geodesics and relation to the periodic generalized Proudman--Johnson equations.]
It follows from the above discussion that geodesics can blowup in one of two cases: either if $\tau\to \infty$ in finite time, or if $f+\tau \xi$ leaves the positive quadrant in finite time (here we use the fact that $M$ is compact, and so $(f+\tau \xi)^{2/(1-\alpha)}$ is integrable as long as $f+\tau \xi >0$ everywhere).
This suggests that the long time behavior of geodesics depends on the geometry of the manifold $S_\alpha$; for example, in the case $\alpha\in (-1,1)$, it was shown in \cite[Theorem 4.11]{bauer2024p}, using the above formula for geodesics, that blowup occurs for any initial condition, using the fact that in this case $S_\alpha$ is a convex (it is part of the $L^{\frac{2}{1-\alpha}}$-sphere), incomplete sub-manifold (it is shown that $\tau>t$ for $t>0$, hence for every $\xi\in T_fS_\alpha$ one hits the boundary of $S_\alpha$ in finite time).\footnote{We note a slight error in \cite{bauer2024p}:
the geodesic equation can be alternatively also written as
\[
    \ddot\tau (t) = 2 \left(1-\frac{\int_M (f + \tau(t)\xi)^{\frac{1+\alpha}{1-\alpha}} f \,\lambda}{\int_M (f + \tau(t)\xi)^{\frac{2}{1-\alpha}} \,\lambda}\right)\frac{\dot\tau(t)^2}{\tau}.
\]
If $\alpha\in (-1,1)$, one can use H\"older inequality and obtain that, for $f\equiv \frac{2}{|1-\alpha|}$,
\[
\ddot\tau (t) \ge 2 \left(1-\frac{\frac{2}{|1-\alpha|}}{\|\frac{2}{|1-\alpha|} + \tau(t)\xi)\|_{L^{\frac{2}{1-\alpha}}(\lambda)}}\right)\frac{\dot\tau(t)^2}{\tau},
\]
and not as written in \cite[Eq.~(25)]{bauer2024p}.
This does not affect the bounds in the rest of the analysis.}
The analysis of \cite[Theorem~4.11]{bauer2024p} extends to  $\alpha=-1$, where the lack of strict convexity of $S_\alpha$ results in the inequality $\tau\ge t$, again resulting in blowup as one reaches the boundary of $S_\alpha$ in finite time.
For the case $\alpha=1$, the space $\tilde{S}_1=\tilde{\Phi}_1(\Prob)$ is a flat, complete, linear space, hence geodesics exists for all time, cf.\ Remark~\ref{rem:alpha1}.

In Section~\ref{sec:PJnonperiodic} we described a relation between the non-periodic gPJ equations and the the $\alpha$-connections (the $\alpha$-FR metric, resp.) on $\DiffRshift$. 
Similarly, the periodic gPJ equations correspond to geodesics of the $\alpha$-connections on $\operatorname{Prob}(S^1)$ \cite{lenells2014amari}. 
In this case there exists a full characterization of blowup depending on $\alpha$ for these equations: namely,  the gPJ with parameter $\alpha$ is globally well-posed on $C^{\infty}(S^1)$ if and only if $\alpha\in [1,3)$, cf.~\cite{kogelbauer2020global,sarria2013blow}. 
The proofs for these results rely 
on the transformation $\Phi_{\alpha}$ as defined in Theorem~\ref{thm:explicit_dens}; this section establishes the geometric meaning and importance of them.
It would be interesting if the geometric interpretation of the present article could shed further light on these blowup results; in particular, one could try to relate the existence of blow-up to geometric properties of $S_\alpha$ like convexity/concavity and completeness/incompleteness.
Additionally, we expect that Theorem~\ref{thm:sulotion_prob} will enable to generalize these blowup results to any closed $M$.
\end{remark}


\subsection{Finite dimensional statistical models}
In this final section, we investigate the Riemannian property of the $\alpha$-connections in the case of finite-dimensional submanifolds of $\operatorname{Prob}(\R^n)$ that describe parametric statistical models. 
Consider a parametric family of probability densities $\mathcal P_\Theta = \{p(x,\theta)\,dx: \theta\in\Theta\}$, where $x\in\R^n$ denotes the sample variable, $dx$ denotes the Lebesgue measure on $\R^n$ and $\theta\in\R^d$ is the parameter. 
The Fisher--Rao metric on $\mathcal{P}_\Theta$ is obtained by restricting the metric from $\operatorname{Prob}(\R^n)$, and the $\alpha$-connections are obtained by projecting the connections from the larger space.
Below we recall the explicit expressions, when  $\mathcal{P}_\Theta$ is identified with the parameter space $\Theta$.
\begin{definition}[Fisher--Rao metric and $\alpha$-connections on $\Theta$]
The Fisher--Rao metric on the parameter space $\Theta$ is defined for any $1\leq i,j\leq d$ by
\[
G_{ij}(\theta):=E_\theta\left[\partial_i\ell(X,\theta)\partial_j\ell(X,\theta)\right]
\]
where $\partial_i:=\frac{\partial}{\partial\theta^i}$, $\ell(x,\theta)=\log p(x,\theta)$ is the log-density and $E_\theta$ denotes expectation with respect to the density $p(\cdot,\theta)$. The $\alpha$-connection $\nabla^{(\alpha)}$ has Christoffel symbols defined for any $1\leq i,j,k\leq d$ by
$$\Gamma^{(\alpha)}_{ij,k}(\theta):=G_\theta(\nabla^{(\alpha)}_{\partial_i}\partial_j,\partial_k):=E_\theta\left[\left(\partial_i\partial_j\ell(X,\theta)+\frac{1-\alpha}{2}\partial_i\ell(X,\theta)\partial_j\ell(X,\theta)\right)\partial_k\ell(X,\theta)\right].$$
\end{definition}
Note that here and below, the Christoffel symbols of the first kind $\Gamma_{ij,k}$ are obtained from the Christoffel symbols of the second kind $\Gamma_{ij}^k$ by lowering the index with respect to the Fisher--Rao metric, even when the connection is not metric with respect to it.

The following example shows that in finite dimensions, we can get a different behavior than the one described in Theorem~\ref{thm:alpha_nonmetric} in infinite dimensions. For this particular model, taken from \cite{amari2000methods}, all $\alpha$-connections are flat and thus Riemannian.
\begin{example}\label{ex:finite-dim}
Consider a probability measure $q$ on $\R$, $d$ i.i.d. random variables $Y_1,\hdots,Y_d\sim q$, and the distribution of the random vector $X=Y+\theta$ obtained by translating $Y=(Y_1,\hdots,Y_d)$ in the direction of a given $\theta\in\R^d$, i.e.,
$$\mathcal M=\{p(x,\theta)=q^{(d)}(x-\theta),\theta\in\R^d\}\quad \text{where}\quad q^{(d)}(x)=\prod_{i=1}^dq(x_i) \quad \forall x=(x_1,\hdots,x_d).$$
Then the Fisher--Rao metric is given by
\begin{align*}
G_{ij}(\theta)&=E_{X\sim p(\cdot,\theta)}\left[(\log q)'(X_i-\theta_i)(\log q)'(X_j-\theta_j)\right]\\
&=\delta_{ij}r \quad \text{where} \quad r=E_{Y\sim q}\left[(\log q)'(Y)^2\right],
\end{align*}
where we have used the fact that $E_q\left[(\log q)'(Y)\right]=0$. Notice that $r$ is independent of $\theta$ and thus the Fisher--Rao metric on $\mathcal M$ is Euclidean. Similarly, the Christoffel symbols of the $\alpha$-connection are given by
\begin{align*}
\Gamma_{ij,k}^{(\alpha)}(\theta)&=-E_{Y \sim q^{(d)}}\left[\left(\delta_{ij}(\log q)''(Y_i)+\frac{1-\alpha}{2}(\log q)'(Y_i)(\log q)'(Y_j)\right)(\log q)'(Y_k)\right]\\
&=\begin{cases}
-E_{Y\sim q}\left[\left((\log q)''(Y)+\frac{1-\alpha}{2}(\log q)'(Y)^2\right)(\log q)'(Y)\right] \quad \text{if}\quad i=j=k\\
0 \quad \text{otherwise}.
\end{cases}
\end{align*}
Thus $\Gamma_{ij,k}^{(\alpha)}$ is zero unless $i=j=k$, for which it is equal to a constant that does not depend on $\theta$. Dropping the $\alpha$ exponent in the Christoffel symbols for the sake of simplicity, we obtain for the curvature tensor
\begin{align*}
{R^{(\alpha)}}_{ijk}^\ell&=\frac{\partial}{\partial\theta_i}\Gamma_{jk}^\ell -\frac{\partial}{\partial\theta_j}\Gamma_{ik}^\ell+\Gamma_{im}^\ell\Gamma_{jk}^m-\Gamma_{jm}^\ell\Gamma_{ik}^m\\
&=\frac{1}{r^2}\left(\Gamma_{im,\ell}\Gamma_{jk,m}-\Gamma_{jm,\ell}\Gamma_{ik,m}\right)=0,
\end{align*}
so that the $\alpha$-connection is flat on the model $\mathcal M$ identified with its parameter space $\R^d$, for any value of $\alpha$. Thus it admits a local parallel frame field, which is global since the manifold is simply connected. By declaring this frame field to be orthonormal one defines a metric with respect to which the connection is metric. Since $\nabla^{(\alpha)}$ is torsionless, it is the Levi-Civita connection of this metric.
Thus the $\alpha$-connections on $\mathcal M$ are Riemannian connections for all $\alpha$, unlike in the infinite-dimensional setting.
\end{example}

By contrast, for other standard examples, namely exponential families with a two-dimensional parameter space, we get the same picture as in the non parametric setting: the $\alpha$-connections are non metric except for $\alpha\in\{\pm 1,0\}$. Note that the case $\alpha=\pm 1$ is not trivial since the definition of the $\alpha$-connections on these statistical models are obtained by projecting the connections from the larger space by the FR metric, which is not the metric with respect to which the connection is metric. 
\begin{definition}[Exponential family]
A parametric model is an exponential family with natural parameter $\theta$ if the density is of the form
\begin{equation}\label{exp_family}
p(x,\theta)=\exp(h(x)+\theta\cdot\varphi(x)-F(\theta))
\end{equation}
for certain functions $h$, $\varphi$ on $\R^n$ and $F$ on $\Theta$.
\end{definition}
The following geometric properties of exponential families are well-known. 
\begin{lemma}[Geometry of exponential families]\label{lem:exponential}
Let $\mathcal P_\Theta = \{p(x,\theta)\,dx: \theta\in\Theta\}$ be an exponential family, i.e., such that $p$ is of the form \eqref{exp_family}. Then for $1\leq i,j,k,\ell\leq d$:
\begin{itemize}
\item The Fisher--Rao metric is the hessian of the log-partition function $F$
\begin{equation}\label{exp_fisher_rao}
G_{ij}(\theta)=\partial_i\partial_j F(\theta).
\end{equation}
\item The Christoffel symbols of the $\alpha$-connections are given by
\begin{equation}\label{exp_alpha}
\Gamma^{(\alpha)}_{ij,k}=\frac{1-\alpha}{2}F_{ijk}.
\end{equation}
\item The curvature tensor is written
\begin{equation}\label{exp_alpha_curv}{R^{(\alpha)}}_{ijk}^\ell=\frac{1-\alpha^2}{4}G^{rm}G^{s\ell}\left(F_{ikm}F_{jrs}-F_{jkm}F_{irs}\right)
\end{equation}
\end{itemize}
Here we have used the notation $F_{ijk}:=\partial_i\partial_j\partial_kF$.
\end{lemma}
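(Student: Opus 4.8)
The plan is to establish all three formulas by direct computation from the exponential form \eqref{exp_family}, the only genuine input being the standard cumulant identities for exponential families. Writing $\ell(x,\theta)=h(x)+\theta\cdot\varphi(x)-F(\theta)$, I first record $\partial_i\ell=\varphi_i-F_i$ and $\partial_i\partial_j\ell=-F_{ij}$ (the latter independent of $x$), where $F_i:=\partial_iF$. Differentiating the normalization $\int p(x,\theta)\,dx=1$ --- equivalently, using $F(\theta)=\log\int e^{h+\theta\cdot\varphi}\,dx$ --- gives $E_\theta[\varphi_i]=F_i$ and hence $E_\theta[\partial_i\ell]=0$, i.e.\ the score has zero mean.

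For the first formula I would use $G_{ij}=E_\theta[\partial_i\ell\,\partial_j\ell]=\operatorname{Cov}_\theta(\varphi_i,\varphi_j)$; differentiating $E_\theta[\varphi_j]=F_j$ once more in $\theta^i$ (using $\partial_i p = p\,\partial_i\ell$) yields $\operatorname{Cov}_\theta(\varphi_i,\varphi_j)=F_{ij}$, which is \eqref{exp_fisher_rao}. The same differentiation trick gives the key moment identity $E_\theta[\partial_i\ell\,\partial_j\ell\,\partial_k\ell]=F_{ijk}$: differentiating $G_{ij}=F_{ij}$ in $\theta^k$, the explicit $\theta$-dependence of $(\varphi_i-F_i)(\varphi_j-F_j)$ contributes terms proportional to $E_\theta[\varphi-F]=0$, leaving exactly the third central moment. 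Substituting $\partial_i\partial_j\ell=-F_{ij}$ and this identity into the definition of $\Gamma^{(\alpha)}_{ij,k}$, the first summand multiplies $E_\theta[\partial_k\ell]=0$ and drops out, giving $\Gamma^{(\alpha)}_{ij,k}=\frac{1-\alpha}{2}F_{ijk}$, which is \eqref{exp_alpha}.

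For the curvature \eqref{exp_alpha_curv} I would raise an index to get the second-kind symbols $\Gamma_{ij}^\ell=\tau\,G^{\ell m}F_{ijm}$ with $\tau=\frac{1-\alpha}{2}$, and substitute into the standard formula $R_{ijk}^\ell=\partial_i\Gamma_{jk}^\ell-\partial_j\Gamma_{ik}^\ell+\Gamma_{im}^\ell\Gamma_{jk}^m-\Gamma_{jm}^\ell\Gamma_{ik}^m$ already used in Example~\ref{ex:finite-dim}. The two ingredients are $\partial_kG_{ij}=F_{ijk}$ (from \eqref{exp_fisher_rao}) and $\partial_iG^{\ell m}=-G^{\ell p}G^{mq}F_{ipq}$. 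When I antisymmetrize in $i,j$, the fourth-derivative terms $\tau G^{\ell m}F_{ijkm}$ are symmetric and cancel; the surviving derivative-of-metric terms carry coefficient $-\tau$ and the quadratic Christoffel terms carry coefficient $\tau^2$, and after relabelling the dummy indices (using the symmetry of $F$ and of $G^{-1}$) both reduce to the same tensorial bracket. Their coefficients combine to $\tau^2-\tau=-\tau(1-\tau)=-\frac{1-\alpha^2}{4}$, and a final sign flip from rewriting the bracket in the order $F_{ikm}F_{jrs}-F_{jkm}F_{irs}$ produces \eqref{exp_alpha_curv}.

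The main obstacle is purely the index bookkeeping in the last part: one must verify that the metric-derivative contribution and the quadratic contribution really collapse to the single antisymmetric bracket $G^{rm}G^{s\ell}(F_{ikm}F_{jrs}-F_{jkm}F_{irs})$ once the dummy indices are matched, and track the two sign changes (the $\tau^2-\tau$ combination and the reordering of the bracket) so that the final coefficient comes out $+\frac{1-\alpha^2}{4}$ rather than its negative. The first two formulas are routine once the cumulant identities $E_\theta[\varphi_i]=F_i$, $\operatorname{Cov}_\theta(\varphi_i,\varphi_j)=F_{ij}$, and the third-moment identity $E_\theta[\partial_i\ell\,\partial_j\ell\,\partial_k\ell]=F_{ijk}$ are in hand.
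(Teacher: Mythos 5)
Your proposal is correct and follows essentially the same route as the paper: the zero-mean score kills the Hessian term in $\Gamma^{(\alpha)}_{ij,k}$, the key identity $E_\theta[\partial_i\ell\,\partial_j\ell\,\partial_k\ell]=F_{ijk}$ reduces the Christoffel symbols to $\tfrac{1-\alpha}{2}F_{ijk}$, and the curvature follows by substituting $\Gamma_{ij}^\ell=\tfrac{1-\alpha}{2}F_{ijm}G^{m\ell}$ into the coordinate formula, where your sign bookkeeping ($\tau^2-\tau$ against the reversed bracket, then a flip) checks out and reproduces $\tfrac{1-\alpha^2}{4}$. The only (inessential) difference is how you obtain the third-moment identity --- by differentiating the cumulant relations $E_\theta[\varphi_i]=F_i$ and $\operatorname{Cov}_\theta(\varphi_i,\varphi_j)=F_{ij}$ in $\theta$, whereas the paper expands $E_\theta[\ell_{ijk}]$ in terms of density derivatives and uses the $x$-independence of $\ell_{ij}$; both are routine and valid.
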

\begin{proof}
The expression for $G_{ij}$ follows from straightforward computation. Since the hessian of the log-density is independent of the sample variable and $\partial_k\ell(X,\theta)$ has zero expectation, we get
\begin{align*}
\Gamma^{(\alpha)}_{ij,k}&=-\partial_i\partial_jF(\theta)E_\theta\big[\partial_k\ell(X,\theta)\big]+\frac{1-\alpha}{2}E_\theta\big[\partial_i\ell(X,\theta)\partial_j\ell(X,\theta)\partial_k\ell(X,\theta)\big]\\
&=\frac{1-\alpha}{2}E_\theta\big[\partial_i\ell(X,\theta)\partial_j\ell(X,\theta)\partial_k\ell(X,\theta)\big].
\end{align*}
Denoting $\partial_if=f_i$, we notice that, since $\ell_{ij}=f_{ij}/f- f_{i}f_j/f^2$ and $\ell_{ij}$ is constant in the sample variable,
\begin{align*}
E_\theta\big[\ell_{ijk}\big]&=E_\theta\left[\partial_i\left(\frac{f_{jk}}{f}-\frac{f_jf_k}{f^2}\right)\right]=E_\theta\left[\frac{f_{ijk}}{f}-\frac{f_{jk}f_i}{f^2}-\frac{f_{ki}f_j}{f^2}-\frac{f_{ij}f_k}{f^2}+2\frac{f_if_jf_k}{f^3}\right]\\
&=0-E_\theta\big[\ell_{jk}\ell_i\big]-E_\theta\big[\ell_{ki}\ell_j\big]-E_\theta\big[\ell_{ij}\ell_k\big]-E_\theta\big[\ell_i\ell_j\ell_j\big]\\
&=-\ell_{jk}E_\theta\big[\ell_i\big]-\ell_{ki}E_\theta\big[\ell_j\big]-\ell_{ij}E_\theta\big[\ell_k\big]-E_\theta\big[\ell_i\ell_j\ell_j\big]=-E_\theta\big[\ell_i\ell_j\ell_j\big],
\end{align*}
and since $E_\theta\big[\ell_{ijk}\big]=-F_{ijk}$ we obtain $\Gamma_{ij,k}^{(\alpha)}=\frac{1-\alpha}{2}F_{ijm}$. Then the curvature tensor defined by $R^{(\alpha)}(\partial_i,\partial_j)\partial_k=:{R^{(\alpha)}}_{ijk}^\ell \partial_\ell$ is written
\begin{align*}
{R^{(\alpha)}}_{ijk}^\ell&=\partial_i\Gamma_{jk}^\ell -\partial_j\Gamma_{ik}^\ell+\Gamma_{im}^\ell\Gamma_{jk}^m-\Gamma_{jm}^\ell\Gamma_{ik}^m\\
&=\frac{1-\alpha}{2}\left(F_{ijkm}G^{m\ell}+F_{jkm}\partial_iG^{m\ell}-F_{ijkm}G^{m\ell}+F_{ikm}\partial_jG^{m\ell}\right)\\
&+\frac{(1-\alpha)^2}{4}\left(F_{jkr}g^{rm}F_{ims}G^{s\ell}-F_{ikr}G^{rm}F_{jms}G^{s\ell}\right).
\end{align*}
Then using the fact that $\partial_iG^{mr}G_{rs}=-G^{mr}\partial_iG_{rs}$ and so $\partial_iG^{m\ell}=-G^{mr}\partial_iG_{rs}G^{s\ell}$, we obtain
\begin{align*}
{R^{(\alpha)}}_{ijk}^\ell&=\frac{1-\alpha}{2}\left(F_{jkm}F_{irs}G^{rm}G^{s\ell}+F_{ikm}F_{jrs}G^{mr}G^{s\ell}\right)\\&\qquad+\frac{(1-\alpha)^2}{4}\left(F_{jkr}F_{ims}G^{rm}G^{s\ell}-F_{ikr}F_{jms}G^{rm}G^{s\ell}\right)\\
&=\frac{1-\alpha}{4}\bigg(-2F_{jkm}F_{irs}G^{rm}G^{s\ell}+F_{ikm}F_{jrs}G^{rm}G^{s\ell}\\&\qquad+(1-\alpha)F_{jkm}F_{irs}G^{rm}G^{s\ell}-(1-\alpha)F_{ikm}F_{jrs}G^{rm}G^{s\ell}\bigg)\\
&=\frac{1-\alpha^2}{4}G^{rm}G^{s\ell}\left(F_{ikm}F_{jrs}-F_{jkm}F_{irs}\right)
\end{align*}
as expected.
\end{proof}
The following lemma summarizes the Riemannian or non-Riemannian properties of $\na$ on a two-dimensional exponential family. 
\begin{proposition}\label{prop:exponential}
Let $\mathcal P_\Theta = \{p(x,\theta)\,dx: \theta\in\Theta\}$ be an exponential family, i.e., such that $f$ is of the form \eqref{exp_family}. Assume that the Fisher--Rao metric on $\Theta$ is not flat and $\dim\Theta=2$. Then
\begin{itemize}
\item If $\alpha=1$, then $\nabla^{(\alpha)}$ is the Levi-Civita connection of the Euclidean metric on $\Theta$.
\item If $\alpha=0$, then $\nabla^{(\alpha)}$ is the Levi-Civita connection of the Fisher--Rao metric.
\item If $\alpha=-1$, then $\nabla^{(\alpha)}$ is the Levi-Civita connection of the metric $G^{(-1)}(\theta)=(\mathrm{Hess}_\theta F)^2$.
\item If $\alpha \notin\{1,0,-1\}$ then $\nabla^{(\alpha)}$ is a non metric affine connection.
\end{itemize}
\end{proposition}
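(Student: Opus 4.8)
The plan is to build everything on Lemma~\ref{lem:exponential}, which already supplies the Fisher--Rao metric $G_{ij}=\partial_i\partial_j F$, the Christoffel symbols $\Gamma^{(\alpha)}_{ij,k}=\tfrac{1-\alpha}{2}F_{ijk}$, and the curvature \eqref{exp_alpha_curv}. The decisive first observation is that, since the tensorial part of \eqref{exp_alpha_curv} does not depend on $\alpha$, the curvature factorizes as $R^{(\alpha)}=(1-\alpha^2)R^{(0)}$. Because $\nabla^{(0)}$ is the Levi-Civita connection of $G$ and $\dim\Theta=2$, its Riemann tensor is forced into the two-dimensional constant-curvature shape, so that
\[
{R^{(\alpha)}}_{ijk}^\ell=(1-\alpha^2)\,K\bigl(G_{jk}\delta_i^\ell-G_{ik}\delta_j^\ell\bigr),
\]
where $K=K(\theta)$ is the Gauss curvature of $G$. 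This is exactly the constant-curvature form appearing in the infinite-dimensional Lemma~\ref{prop:alpha-curvature}, now with a footpoint-dependent ``curvature'' $(1-\alpha^2)K$. The hypothesis that $G$ is not flat means $K\not\equiv 0$, so $U:=\{\theta:K(\theta)\neq 0\}$ is a nonempty open set, on which the problem falls into the same template as Theorem~\ref{thm:alpha_nonmetric}.

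For the three metric cases I argue directly. For $\alpha=1$, Lemma~\ref{lem:exponential} gives $\Gamma^{(1)}_{ij,k}=0$, so $\nabla^{(1)}$ is the trivial connection in the coordinates $\theta$, i.e.\ the Levi-Civita connection of the Euclidean metric. For $\alpha=0$ the assertion is just the characterization of $\nabla^{(0)}$ as the Levi-Civita connection of $G$. For $\alpha=-1$ the mixture connection is flat (by the factor $1-\alpha^2$) and torsionless; a direct check shows its Christoffel symbols vanish in the expectation coordinates $\eta_i=\partial_i F$, whose Jacobian with respect to $\theta$ is $G$. Transforming the $(0,2)$-tensor $G^{(-1)}=(\mathrm{Hess}_\theta F)^2$, with components $\sum_m G_{im}G_{mj}$ in the $\theta$-chart, into the $\eta$-chart yields the constant Euclidean metric $\sum_a(d\eta_a)^2$; being constant in affine coordinates it is $\nabla^{(-1)}$-parallel, and torsionlessness then identifies $\nabla^{(-1)}$ as its Levi-Civita connection.

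For the non-metric range $\alpha\notin\{-1,0,1\}$ I mimic the proof of Theorem~\ref{thm:alpha_nonmetric}. Suppose $\nabla^{(\alpha)}$ were the Levi-Civita connection of a Riemannian metric $\bar g$. Then $\bar g(R^{(\alpha)}(a,b)c,c)=0$; substituting the constant-curvature form and using $(1-\alpha^2)\neq 0$ together with $K\neq 0$ on $U$ gives $G(b,c)\bar g(a,c)=G(a,c)\bar g(b,c)$ for all $a,b,c$, which forces $\bar g=h\,G$ for a positive function $h$ on $U$, exactly the conformality step \eqref{conformal}. A short computation from Lemma~\ref{lem:exponential}, using $\partial_k G_{ij}=F_{ijk}$ and $\Gamma^{(\alpha)}_{ij,k}=\tfrac{1-\alpha}{2}F_{ijk}$, yields $(\nabla^{(\alpha)}_c G)(a,b)=\alpha\,T(a,b,c)$, where $T(a,b,c)=F_{ijk}a^ib^jc^k$ is the totally symmetric Amari--\v{C}encov tensor. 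The compatibility condition $\nabla^{(\alpha)}(hG)=0$ then reads
\[
(d\log h)(c)\,G(a,b)+\alpha\,T(a,b,c)=0\qquad\text{for all }a,b,c.
\]
Since $T$ is totally symmetric, the left-hand side must be totally symmetric, giving $(d\log h)(c)\,G(a,b)=(d\log h)(a)\,G(b,c)$; because $G$ is nondegenerate and $\dim\Theta=2$, this forces $d\log h=0$. With $h$ constant the identity collapses to $\alpha T\equiv 0$, hence $T\equiv 0$ on $U$ as $\alpha\neq 0$; but then \eqref{exp_alpha_curv} makes $R^{(\alpha)}$ vanish and thus $K\equiv 0$ on $U$, contradicting the definition of $U$.

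The main obstacle is the reduction carried out in the first paragraph: recognizing that in $\dim\Theta=2$ the complicated cubic curvature \eqref{exp_alpha_curv} is forced into the clean shape $(1-\alpha^2)K\,(G_{jk}\delta_i^\ell-G_{ik}\delta_j^\ell)$, which both collapses the computation and aligns the finite-dimensional problem with the infinite-dimensional one. The remaining delicate point is the rigidity at the end --- converting the compatibility identity into $d\log h=0$ via total symmetry genuinely uses $\dim\Theta\geq 2$ and the nondegeneracy of $G$, and the final contradiction genuinely uses the ``not flat'' hypothesis. Note also that the whole argument localizes to $U$, so I never need $\bar g=hG$ globally, only on the open set where $K\neq 0$.
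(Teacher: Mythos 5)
Your proof is correct. The three metric cases are handled exactly as in the paper ($\Gamma^{(1)}_{ij,k}=0$ for $\alpha=1$; the standard fact for $\alpha=0$; affine coordinates $\eta=\nabla F$ and pullback of the Euclidean metric for $\alpha=-1$), but in the non-metric case $\alpha\notin\{-1,0,1\}$ you take a genuinely different and cleaner route at both key steps. For the conformality reduction, the paper computes the curvature entries $a,b,c,d$ explicitly in terms of a scalar $S$, assembles the $3\times 3$ system \eqref{compatibility_cond}, and analyzes its characteristic polynomial to show the kernel is one-dimensional; you instead observe the factorization $R^{(\alpha)}=(1-\alpha^2)R^{(0)}$ and invoke the two-dimensional form $R^{(0)}{}_{ijk}^\ell=K\left(G_{jk}\delta_i^\ell-G_{ik}\delta_j^\ell\right)$, which puts the problem in exactly the shape of Lemma~\ref{prop:alpha-curvature} and lets you reuse the conformality argument of Theorem~\ref{thm:alpha_nonmetric}. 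For the final contradiction, the paper equates the Christoffel symbols of $hG$ with $\Gamma^{(\alpha)}$ and integrates the resulting PDE system for $\log h$ to conclude $\det G=0$; you instead compute $\nabla^{(\alpha)}G=\alpha T$ (correct --- it also follows from the duality of $\nabla^{(\pm\alpha)}$), use total symmetry of $T$ and nondegeneracy of $G$ to force $d\log h=0$, hence $T\equiv 0$ and flatness on $U$, contradicting $K\neq 0$ there. Your explicit localization to $U=\{K\neq 0\}$ also tidies a point the paper treats loosely: non-flatness only guarantees $S\neq 0$ on an open set, and both arguments really live on that set. What the paper's computation buys in exchange is the explicit eigenstructure of the compatibility system (and the formula for $S$); what yours buys is brevity, a transparent link to the infinite-dimensional Theorem~\ref{thm:alpha_nonmetric}, and a rigidity step ($d\log h=0$ from total symmetry) valid in any dimension $\geq 2$, although the conformality step via the constant-curvature form remains intrinsically two-dimensional, so this does not by itself settle the paper's question about higher-dimensional exponential families.
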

\begin{remark}
Note that in the infinite-dimensional setting, it is the $(-1)$-connection that is the trivial connection while here for finite-dimensional submanifolds corresponding to exponential models, it is the $1$-connection. 
On the finite-dimensional version of $\Prob$ defined by the submanifold of probability measures on a finite set (a mixture model, see~\cite{amari2000methods}), it is the $(-1)$-connection that is trivial, just like in the infinite-dimensional setting. This is the reason why the $1$ and $(-1)$-connections are called the exponential and mixture connections, respectively.
\end{remark}

\begin{proof}[Proof of Proposition~\ref{prop:exponential}]
If $\alpha=1$, $\na$ is the trivial connection on $\Theta$ associated with the Euclidean metric on $\Theta$, and if $\alpha=0$ it is well known that it corresponds to the Fisher--Rao metric \cite{amari2000methods}. If $\alpha=-1$, the curvature is also flat and $\eta=\nabla F(\theta)$ are affine coordinates for $\na$ \cite{amari2000methods}, so that pulling back the Euclidean metric yields
$$G^{(-1)}_\theta(u, v)=u^\top(\mathrm{Hess}_\theta F)^2 v, \quad \forall v\in\R^d.$$
It remains to consider the case $\alpha \notin\{1,0,-1\}$. If the connection $\nabla^{(\alpha)}$ was compatible with a Riemannian metric $g$, then for any vector fields $X,Y,Z,W$ we would have $g(R^{(\alpha)}(X,Y)Z,W)+g(R^{(\alpha)}(X,Y)W,Z)=0$, i.e., in local coordinates,
\[
\sum_m {R^{(\alpha)}}^m_{ijk}g_{m\ell} + \sum_m {R^{(\alpha)}}^m_{ij\ell} g_{mk} = 0 \qquad \text{for all $i$,$j$,$k$,$\ell$.}
\]
In dimension $d=2$, denoting $a={R^{(\alpha)}}_{121}^1$, $b={R^{(\alpha)}}_{121}^2$, $c={R^{(\alpha)}}_{122}^1$ and $d={R^{(\alpha)}}_{122}^2$, this yields
\begin{equation}\label{compatibility_cond}
\left(\begin{matrix} a & b & 0\\ -c & -(a+d) & -b\\ 0 & c & d\end{matrix}\right)\left(\begin{matrix} g_{11}\\ g_{12}\\ g_{22}\end{matrix}\right)=0.
\end{equation}
Rearranging formula~\eqref{exp_alpha_curv}, we find that the entries of the curvature tensor are given by
\begin{align*}
a=\frac{1-\alpha^2}{4}G^{12}S, \quad b=\frac{1-\alpha^2}{4}G^{22}S, \quad c=-\frac{1-\alpha^2}{4}G^{11}S, \quad d=-\frac{1-\alpha^2}{4}G^{12}S
\end{align*}
where
\begin{align*}
S=\frac{1}{\det G}\big(F_{22}(F_{111}F_{122}-F_{112}^2)+F_{11}(F_{222}F_{112}-F_{122}^2)-F_{12}(F_{111}F_{222}-F_{112}F_{122})\big).
\end{align*}
The matrix $M$ on the LHS of the compatibility condition~\eqref{compatibility_cond} has characteristic polynomial
$P(\lambda)=-\lambda^3+(a^2+d^2+ad-2bc)\lambda+(a+d)(bc-ad)$, where from the calculations above we have $a=-d$, so that
$$P(\lambda)=\lambda\big(a^2-2bc-\lambda^2\big).$$
Thus $0$ is an eigenvalue of $M$ and a metric $g$ that is compatible with $\na$, if it exists, is described by a corresponding eigenvector. 
Since we have assumed that $R^{(0)}$ is not identically zero, $S$ is not zero, and thus we have for $\alpha\notin\{-1,1\}$
\begin{align*}
a^2-2bc&=\left(\frac{1-\alpha^2}{4}\right)^2S^2\big((G^{12})^2+2G^{11}G^{22}\big)=\left(\frac{1-\alpha^2}{4}\right)^2S^2\left(3(G^{12})^2+\frac{2}{\det G}\right)>0,
\end{align*}
and the $0$-eigenspace is one-dimensional. 
Since it contains the Fisher--Rao metric $G$, it is the set of $g$ that are conformal to $G$
$$g(\theta)=h(\theta)G(\theta)$$
for some positive function $h$. Assume that such a metric $g$ is compatible with $\nabla^{(\alpha)}$. Then its Christoffel symbol is
$${\Gamma^{(\alpha)}}_{ij}^k={\Gamma^{(0)}}_{ij}^k+\frac{1}{2h}G^{mk}\left(\partial_ih\,G_{jm}+\partial_j h\,G_{mi}-\partial_mh\,G_{ij}\right).$$
Combining this with formulas \eqref{exp_fisher_rao} and \eqref{exp_alpha}, we obtain
$$\frac{1-\alpha}{2} F_{ijm}G^{mk}=\frac{1}{2}F_{ijm}G^{mk}+\frac{1}{2}G^{mk}\left(\frac{\partial_ih}{h}G_{jm}+\frac{\partial_j h}{h}G_{mi}-\frac{\partial_mh}{h}G_{ij}\right).$$
Finally, setting $\tilde h=\log h$,
$$-\alpha F_{ijm}=\partial_i\tilde h\, F_{jm} + \partial_j \tilde h\, F_{mi}-\partial_m \tilde h\, F_{ij}.$$
Since $\dim\Theta=2$, we obtain the following equations
$$-\frac{1}{\alpha}\partial_1\tilde h=\frac{F_{111}}{F_{11}}=\frac{F_{221}}{F_{22}}=\frac{F_{121}}{F_{12}}, \quad -\frac{1}{\alpha}\partial_2\tilde h=\frac{F_{112}}{F_{11}}=\frac{F_{222}}{F_{22}}=\frac{F_{122}}{F_{12}},$$
so that integrating with respect to one variable and the other yields
$$\begin{cases}F_{11}=k F_{12}\\ F_{22}=\tilde kF_{12}\end{cases}\Rightarrow 
\begin{cases}F_1=kF_2+k'\\ F_2=\tilde kF_1+\tilde k'\end{cases}\Rightarrow
\begin{cases}F_{12}=k F_{22}\\ F_{12}=\tilde k F_{12}\end{cases}\Rightarrow \tilde k=1/k,$$
and finally we obtain that the Fisher--Rao metric is of the form $G(\theta)=r(\theta)\left(\begin{matrix} k & 1\\ 1 & 1/k\end{matrix}\right)$ which is not invertible and yields a contradiction.
\end{proof}

\bibliography{bibliography}
\bibliographystyle{abbrv}

\end{document}